\newtheorem{thm}{Theorem}[section]
\newtheorem{lem}[thm]{Lemma}
\newtheorem{prop}[thm]{Proposition}
\newtheorem{cor}[thm]{Corollary}
\newtheorem*{tha}{Theorem A}
\newtheorem*{thb}{Theorem B}
\theoremstyle{definition}
\newtheorem{exmp}{Example}[section]
\newcommand{\C}{{\mathbb C}}
\newcommand{\D}{{\mathbb D}}
\newcommand{\R}{{\mathbb R}}
\newcommand{\T}{{\mathbb T}}
\newcommand{\Z}{{\mathbb Z}}
\newcommand{\La}{\Lambda}
\newcommand{\f}{\frac}
\newcommand{\ov}{\overline}
\newcommand{\al}{\alpha}
\newcommand{\be}{\beta}
\newcommand{\ga}{\gamma}
\newcommand{\de}{\delta}
\newcommand{\la}{\lambda}
\newcommand{\ze}{\zeta}
\newcommand{\ph}{\varphi}
\newcommand{\const}{\text{\rm const}}
\numberwithin{equation}{section}
\title[Lacunary polynomials]
{Lacunary polynomials in $L^1$: geometry\\ 
of the unit sphere}
\author{Konstantin M. Dyakonov}
\address{Departament de Matem\`atiques i Inform\`atica, IMUB, BGSMath, Universitat de Barcelona, Gran Via 585, E-08007 Barcelona, Spain}
\address{ICREA, Pg. Llu\'is Companys 23, E-08010 Barcelona, Spain}
\email{konstantin.dyakonov@icrea.cat}
\keywords{Lacunary polynomial, Hardy space, extreme point, exposed point}
\subjclass[2010]{30C10, 30H10, 42A05, 46A55}
\thanks{Supported in part by grant MTM2017-83499-P from El Ministerio de Econom\'ia y Competitividad (Spain) and grant 2017-SGR-358 from AGAUR (Generalitat de Catalunya).}
\begin{document}
\begin{abstract}
Let $\Lambda$ be a finite set of nonnegative integers, and let $\mathcal P(\Lambda)$ be the linear hull of the monomials $z^k$ with $k\in\Lambda$, viewed as a subspace of $L^1$ on the unit circle. We characterize the extreme and exposed points of the unit ball in $\mathcal P(\Lambda)$.
\end{abstract}

\maketitle

\section{Introduction} 

Let $\mathcal P_N$ stand for the set of polynomials (in one complex variable) of degree at most $N$. By a {\it lacunary polynomial}, or {\it fewnomial}, in $\mathcal P_N$ one loosely means a polynomial therein that has \lq\lq gaps," in the sense that it is spanned by some selected monomials from the family $\{z^k:\,k=0,\dots,N\}$ rather than by all of them. Our plan is to look at the space of fewnomials generated by an arbitrary collection $\{z^k:\,k\in\La\}$ with $\La\subset\{0,\dots,N\}$, endow it with a suitable norm, and study the geometry of its unit sphere. 

\par To be precise, suppose that $N$ and $k_1,\dots,k_M$ are positive integers satisfying 
$$k_1<k_2<\dots<k_M<N.$$
We then consider the set
\begin{equation}\label{eqn:deflam}
\La:=\{0,\dots,N\}\setminus\{k_1,\dots,k_M\}
\end{equation}
and define $\mathcal P(\La)$ as the space of polynomials of the form $\sum_{k\in\La}c_kz^k$, with complex coefficients $c_k$. In the special case where $M=0$, the \lq\lq forbidden set" $\{k_1,\dots,k_M\}$ is empty and $\mathcal P(\La)$ reduces to $\mathcal P_N$. 

\par We shall restrict our polynomials to the circle 
$$\T:=\{\ze\in\C:|\ze|=1\}$$
(without forgetting that they actually live on $\C$) and embed $\mathcal P(\La)$ in $L^1=L^1(\T)$, the space of Lebesgue integrable complex-valued functions on $\T$, with norm 
\begin{equation}\label{eqn:lonenorm}
\|f\|_1:=\f 1{2\pi}\int_\T|f(\ze)|\,|d\ze|.
\end{equation}
With a function $f\in L^1$ we associate the sequence of its {\it Fourier coefficients}
$$\widehat f(k):=\f 1{2\pi}\int_\T\ov\ze^kf(\ze)\,|d\ze|,\qquad k\in\Z,$$
and the set 
$$\text{\rm spec}\,f:=\{k\in\Z:\,\widehat f(k)\ne0\},$$
called the {\it spectrum} of $f$. Thus, 
$$\mathcal P(\La)=\{f\in L^1:\,\text{\rm spec}\,f\subset\La\}.$$

\par Next we recall, in the framework of a general (complex) Banach space $X=(X,\|\cdot\|)$, the geometric concepts we shall be concerned with. We write  
$$\text{\rm ball}(X):=\{x\in X:\,\|x\|\le1\}$$
for the closed unit ball of $X$. As usual, a point in $\text{\rm ball}(X)$ is said to be {\it extreme} for the ball if it is not an interior point of any line segment contained in $\text{\rm ball}(X)$. Also, an element $\xi$ of $\text{\rm ball}(X)$ is called an {\it exposed point} thereof if there exists a functional $\phi\in X^*$ of norm $1$ for which 
$$\{x\in\text{\rm ball}(X):\,\phi(x)=1\}=\{\xi\}$$
(so that $\xi$ is the unique point of contact between the ball and the appropriate hyperplane). Clearly, every exposed point is extreme, and every extreme point lies on the sphere $\{x\in X:\,\|x\|=1\}$. 

\par Our goal here is to determine the two types of points in the unit sphere of $\mathcal P(\La)=(\mathcal P(\La),\|\cdot\|_1)$ for an arbitrary set $\La$ as above. This will be accomplished in subsequent sections. 

\par Meanwhile, we proceed with a brief overview of what happens in other important---and better studied---subspaces of $L^1$. First of all, $\text{\rm ball}(L^1)$ has no extreme points and hence no exposed points. Next, we take a look at the {\it Hardy space} 
$$H^1:=\{f\in L^1:\,\text{\rm spec}\,f\subset[0,\infty)\}.$$
Equivalently, $H^1$ is formed by the $L^1$ functions whose Poisson integral (i.e., harmonic extension) is holomorphic on the disk 
$$\D:=\{z\in\C:\,|z|<1\}.$$
Recall that an $H^1$ function is said to be {\it inner} if it has modulus $1$ a.e. on $\T$, while the non-null functions $f\in H^1$ satisfying 
$$\log|\widehat f(0)|=\f 1{2\pi}\int_\T\log|f(\ze)|\,|d\ze|$$
are termed {\it outer}. We refer to \cite[Chapter II]{G} for these concepts and the theory around them, including the inner-outer factorization, etc.

\par Now, it was proved by de Leeuw and Rudin in \cite{dLR} (see also \cite[Chapter IV]{G}) that the extreme points of $\text{\rm ball}(H^1)$ are precisely the outer functions $f\in H^1$ with $\|f\|_1=1$. By contrast, it is far from clear which unit-norm (and outer) functions in $H^1$ arise as exposed points therein. Such functions are also known as {\it rigid}; they turn up in various connections (e.g., in relation to Toeplitz operators, see \cite{Hay, Sar}) and have attracted quite a bit of attention. A number of their properties are established in \cite{Pol, Sar1, Sar2} and \cite[Section 3]{DJFA}. 

\par We further discuss an extension of the de Leeuw--Rudin result, which is relevant to our topic. Let $\ph$ be an essentially bounded function on $\T$, and put 
\begin{equation}\label{eqn:kertoepphi}
K_1(\ph):=\{f\in H^1:\,\ov{z\ph f}\in H^1\}.
\end{equation}
We note that $K_1(\ph)$ is actually the kernel in $H^1$ of the Toeplitz operator with symbol $\ph$, a map we do not formally define here. Assume, in addition, that $K_1(\ph)\ne\{0\}$. (In particular, this happens if $\ov\ph$ is a nonconstant inner function, in which case $K_1(\ph)$ becomes the corresponding {\it model subspace}. Also, if $\ph\equiv0$ then $K_1(\ph)$ is the whole of $H^1$.) Now, by \cite[Theorem 6]{DPAMS}, a function $f\in K_1(\ph)$ with $\|f\|_1=1$ is an extreme point of $\text{\rm ball}(K_1(\ph))$ if and only if 
\begin{equation}\label{eqn:innrelprime}
\text{\rm the inner factors of $f$ and $\ov{z\ph f}$ are relatively prime}
\end{equation}
(meaning that there is no nonconstant inner function $J$ such that $f/J\in H^1$ and $\ov{z\ph f}/J\in H^1$). For model subspaces in $H^1$, this was established earlier in \cite{DKal}. Yet another de Leeuw--Rudin type theorem related to Toeplitz kernels can be found in \cite{DAMP}. As far as exposed points of $\text{\rm ball}(K_1(\ph))$ are concerned, no nice description is available; see, however, \cite[Section 2]{DSib} for some partial results in the model subspace setting. 

\par Finally, we turn to $\mathcal P_N$, the space of polynomials $p$ with $\deg p\le N$, viewed again as a subspace of $L^1$. One easily checks that $\mathcal P_N$ is a special case of $K_1(\ph)$, as defined by \eqref{eqn:kertoepphi}, with $\ph(z)=\ov z^{N+1}$. Therefore, to decide whether a unit-norm polynomial $p\in\mathcal P_N$ is an extreme point of $\text{\rm ball}(\mathcal P_N)$, we only have to rewrite condition \eqref{eqn:innrelprime} with this specific $\ph$ plugged in, and with $p$ in place of $f$. The role of the function $\ov{z\ph f}$ goes then to the polynomial $p^*\in\mathcal P_N$ defined (on $\T$) by the formula $p^*=z^N\ov p$, so that 
\begin{equation}\label{eqn:pstar}
p^*(z):=z^N\ov{p\left(1/\ov z\right)},\qquad z\in\C\setminus\{0\}.
\end{equation}
Equivalently, if 
\begin{equation}\label{eqn:psumckzk}
p(z)=\sum_{k=0}^Nc_kz^k,
\end{equation}
then $p^*(z)=\sum_{k=0}^N\ov c_kz^{N-k}$. The next fact now comes out readily. 

\begin{tha} Suppose that $p\in\mathcal P_N$ and $\|p\|_1=1$. The following conditions are equivalent. 
\par{\rm (i.A)} $p$ is an extreme point of $\text{\rm ball}(\mathcal P_N)$. 
\par{\rm (ii.A)} The polynomials $p$ and $p^*$ have no common zeros in $\D$.
\end{tha}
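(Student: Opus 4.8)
The plan is to derive Theorem A directly from \cite[Theorem 6]{DPAMS}, quoted above, once the identification $\mathcal P_N=K_1(\ph)$ with $\ph=\ov z^{N+1}$ is made explicit and the abstract coprimality condition \eqref{eqn:innrelprime} is rewritten in this concrete situation.

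First I would verify the identification. On $\T$ one has $z\ph=z\ov z^{N+1}=\ov z^N$, so for $f\in H^1$ the function $\ov{z\ph f}$ equals $z^N\ov f$ a.e.\ on $\T$; expanding $f=\sum_{k\ge0}\widehat f(k)z^k$ shows that $z^N\ov f$ has spectrum in $[0,\infty)$ precisely when $\widehat f(k)=0$ for $k>N$, that is, precisely when $f\in\mathcal P_N$. Hence $K_1(\ph)=\mathcal P_N$, and, taking $f=p$, the companion function $\ov{z\ph p}$ equals $z^N\ov p$, which on $\T$ coincides with the polynomial $p^*$ of \eqref{eqn:pstar} (since $1/\ov z=z$ there). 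Both $p$ and $p^*$ lie in $\mathcal P_N$, and since $\|p\|_1=1$ neither is the zero function; moreover $K_1(\ph)\ne\{0\}$. Thus \cite[Theorem 6]{DPAMS} applies and tells us that $p$ is an extreme point of $\text{\rm ball}(\mathcal P_N)$ if and only if the inner factors of $p$ and $p^*$ are relatively prime.

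It then remains to translate \lq\lq the inner factors of $p$ and $p^*$ are relatively prime'' into condition (ii.A). For this I would invoke the elementary fact that the inner factor of a nonzero polynomial is the finite Blaschke product built from its zeros lying in $\D$, counted with multiplicities: a linear factor $z-a$ is outer when $|a|\ge1$, while for $|a|<1$ it factors as a Blaschke factor times the outer polynomial $1-\ov a z$. Consequently, a nonconstant inner function $J$ divides both $p$ and $p^*$ in $H^1$ if and only if $J$ is a finite Blaschke product, hence has a zero $a\in\D$, and $a$ is then a common zero of $p$ and $p^*$; conversely, a common zero $a\in\D$ furnishes such a $J$, namely the single Blaschke factor at $a$. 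So the inner factors of $p$ and $p^*$ fail to be relatively prime exactly when $p$ and $p^*$ have a common zero in $\D$. Combining this with the previous paragraph yields (i.A)$\Leftrightarrow$(ii.A).

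There is no serious obstacle here once \cite[Theorem 6]{DPAMS} is available; the only points requiring care are the bookkeeping in identifying $\ov{z\ph f}$ with $p^*$ on $\T$ (keeping track of the conjugations and of the power of $z$) and the observation that a polynomial's inner factor detects only its zeros inside $\D$, the zeros on $\T$ contributing outer factors.
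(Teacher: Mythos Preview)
Your proposal is correct and follows exactly the route sketched in the paper: identify $\mathcal P_N$ with $K_1(\ph)$ for $\ph=\ov z^{N+1}$, invoke \cite[Theorem~6]{DPAMS}, and then translate the coprimality condition \eqref{eqn:innrelprime} by noting that the inner factor of a polynomial is the Blaschke product of its zeros in $\D$. You have simply supplied the details the paper leaves implicit when it says \lq\lq The next fact now comes out readily.\rq\rq
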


It should be noted that whenever $p(a)=0$ for a point $a\in\C\setminus\{0\}$, it follows that $p^*(1/\ov a)=0$, the multiplicities of these respective zeros being the same. Also, we have $p(0)=0$ if and only if $\deg p^*<N$; moreover, the multiplicity of zero at $0$ will be at least $\ell$ if and only if $\deg p^*\le N-\ell$. The roles of $p$ and $p^*$ are of course interchangeable in these statements, since $(p^*)^*=p$. 

\par This said, we can rephrase condition (ii.A) in terms of $p$ alone. Namely, a polynomial \eqref{eqn:psumckzk} satisfies (ii.A) if and only if it has the properties that 
$$|c_0|+|c_N|\ne0$$
and 
\begin{equation}\label{eqn:nosymmzeros}
\text{\rm $p$ has no pair of symmetric zeros with respect to $\T$}
\end{equation}
(i.e., there is no point $a\in\D\setminus\{0\}$ for which $p(a)=p(1/\ov a)=0$). It was in this form that Theorem A appeared in \cite[Section 3]{DMRL2000}, where the next result was also established. 

\begin{thb} Suppose that $p\in\mathcal P_N$ and $\|p\|_1=1$. The following are equivalent. 
\par{\rm (i.B)} $p$ is an exposed point of $\text{\rm ball}(\mathcal P_N)$. 
\par{\rm (ii.B)} Condition {\rm (ii.A)} is fulfilled, and the zeros of $p$ lying on $\T$ (if any) are all simple.
\end{thb}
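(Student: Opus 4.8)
The plan is to reduce exposedness to a positivity problem and then settle it by counting zeros. Since a nonzero polynomial vanishes only on a finite subset of $\T$, the equality case of H\"older's inequality shows that a unit-norm $p\in\mathcal P_N$ possesses exactly one norm-one functional on $\mathcal P_N$ taking the value $1$ at $p$, namely the one induced by $\ov p/|p|\in L^\infty$; hence $p$ is an exposed point of $\operatorname{ball}(\mathcal P_N)$ if and only if it is the only element of the associated contact set. Another appeal to the equality case identifies that contact set with $\{q\in\mathcal P_N:\|q\|_1=1,\ q\ov p\ge0\ \text{a.e. on }\T\}$, so, writing
$$\K:=\{q\in\mathcal P_N:\ q\ov p\ge0\ \text{a.e. on }\T\}$$
for the cone of all such $q$ (which contains $\R_{\ge0}p$), we see that $p$ is exposed precisely when $\K=\R_{\ge0}p$: that is, precisely when every $q\in\mathcal P_N$ with $q\ov p\ge0$ a.e. on $\T$ is a nonnegative multiple of $p$.

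For $(\mathrm{ii.B})\Rightarrow(\mathrm{i.B})$, assume (ii.A) holds and that the zeros of $p$ on $\T$ are all simple, and let $q\in\K$ with $q\ne0$ (the case $q=0$ being trivial). On $\T$ the function $q\ov p$ is real, hence equals $\ov qp$; multiplying by $z^N$ and using $p^*=z^N\ov p$ and $q^*=z^N\ov q$ on $\T$ (with $q^*(z):=z^N\ov{q(1/\ov z)}$) produces the polynomial identity $qp^*=q^*p$. Equivalently, the rational function $v:=q/p$ satisfies $v(z)=\ov{v(1/\ov z)}$ for every $z$, while $v\ge0$ on $\T$ off a finite set. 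I then claim that $v$ has no poles at all. A pole on $\T$ could lie only at a zero of $p$ on $\T$, which is simple, so it would be a simple pole---incompatible with $v$ being real and nonnegative along $\T$ near that point. A pole at a point $a\in\D\setminus\{0\}$ is matched, through the symmetry $v(z)=\ov{v(1/\ov z)}$, by a pole at $1/\ov a$; as both poles must be zeros of $p$, the polynomial $p$ would have the symmetric pair $\{a,1/\ov a\}$ forbidden by (ii.A), and the same symmetry rules out poles in $\C\setminus\ov\D$. Finally, if $\deg p=N$ then $\deg q\le\deg p$, and comparing degrees in $qp^*=q^*p$ (recall $\deg(r^*)=N-\operatorname{ord}_0r$ for $r\in\mathcal P_N$) yields $\operatorname{ord}_0q-\operatorname{ord}_0p=\deg p-\deg q\ge0$, so $0$ is not a pole of $v$; while if $\deg p<N$, the remaining half of (ii.A)---namely $|c_0|+|c_N|\ne0$---forces $p(0)\ne0$, so again $0$ is not a zero of $p$. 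Thus $v$ is a polynomial; but were $v$ nonconstant, $z\mapsto\ov{v(1/\ov z)}$ would have a pole at $0$, contradicting $v(z)=\ov{v(1/\ov z)}$. Hence $v$ is a constant, necessarily $\ge0$ by positivity on $\T$, so $q=cp$ with $c\ge0$. Therefore $\K=\R_{\ge0}p$ and $p$ is exposed.

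For $(\mathrm{i.B})\Rightarrow(\mathrm{ii.B})$, an exposed point is extreme, so Theorem A gives (ii.A); it remains to check that a zero $\tau\in\T$ of $p$ of multiplicity at least $2$ destroys exposedness. Write $p=(z-\tau)^2g$ with $g\in\mathcal P_{N-2}$ and put $q:=-\tau zg\in\mathcal P_{N-1}\subset\mathcal P_N$. Using the identity $(z-\tau)^2=-\tau z\,|z-\tau|^2$, valid on $\T$, a short computation yields $q\ov p=|z-\tau|^2|g|^2\ge0$ a.e. on $\T$, so $q\in\K$; and $q$ is not a multiple of $p$ since $\deg q<\deg p$. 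Thus $\K\ne\R_{\ge0}p$ and $p$ is not exposed, which completes the proof.

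The soft parts are the passage to the cone $\K$ in the first step and the reflection identity $qp^*=q^*p$; the delicate point---and, I expect, the main obstacle---is the bookkeeping at $0$ and $\infty$ (keeping $\operatorname{ord}_0p$, $\deg p$, and the degrees of $p^*$, $q$, $q^*$ straight), together with checking that the two halves of (ii.A)---absence of a symmetric zero pair, and $|c_0|+|c_N|\ne0$---are exactly what is needed to exclude poles of $v$ in $\D\setminus\{0\}$ and at $0$, respectively.
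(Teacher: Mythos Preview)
Your argument is correct. A small caveat on the comparison task: the paper does not actually prove Theorem~B---it is quoted from \cite{DMRL2000}---so the relevant benchmark is the general machinery the paper builds for $\mathcal P(\La)$ (Theorem~\ref{thm:exposedpoints} and its proof in Section~6), specialized to $M=0$.

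Both you and the paper start from the same reduction: your cone $\K$ and the uniqueness of the norming functional are exactly the content of Lemma~\ref{lem:charexposed}. After that the routes diverge. You work directly with the rational function $v=q/p$, read off the reflection identity $v(z)=\ov{v(1/\ov z)}$ from $qp^*=q^*p$, and eliminate potential poles one location at a time: simple poles on $\T$ are ruled out by the sign of $v$, poles off $\T$ come in symmetric pairs forbidden by \eqref{eqn:nosymmzeros}, and the pole at $0$ is handled by the degree/order bookkeeping together with $|c_0|+|c_N|\ne0$. The paper instead factors $p=\widetilde G\widetilde R$, shows via the $H^{1/2}$ lemma (Lemma~\ref{lem:onehalf}) that the admissible multipliers are of the form $\widetilde Q/\widetilde G$ with $z^{-\widetilde m}\widetilde Q$ a nonnegative trigonometric polynomial of degree at most $\widetilde m$, and then parametrizes these by plus-vectors in $\ker\widetilde{\mathfrak M}$; for $M=0$ one has $\widetilde{\mathcal N}=\R^{2\widetilde m+1}$, and $\dim_+\widetilde{\mathcal N}=1$ forces $\widetilde m=0$, which is precisely (ii.B).

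Your approach is more elementary---no Hardy-space input beyond the equality case of H\"older---and arguably cleaner for $\mathcal P_N$. What it does not buy is the lacunary case: once spectral constraints $\widehat q(k_j)=0$ enter, the pole analysis of $q/p$ no longer suffices, and the paper's factorization-plus-matrix scheme is what survives.
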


Thus, in particular, (ii.B) includes a stronger version of property \eqref{eqn:nosymmzeros}: this time, even \lq\lq degenerate pairs" of symmetric zeros (i.e., multiple zeros on $\T$) are forbidden for $p$. 

\par We mention in passing that Theorem A has a counterpart in the case where $\mathcal P_N$ is endowed with the supremum norm $\|\cdot\|_\infty$. Indeed, the extreme points of the unit ball in $(\mathcal P_N,\|\cdot\|_\infty)$ were characterized by the author in \cite{DMRL2003}. No such counterpart of Theorem B seems to be currently available, except for a partial result (see \cite[Theorem 1.2]{Neu}) that settles the case of $(\mathcal P(\La),\|\cdot\|_\infty)$ for a three-point set $\La$. 

\par Going back to the $L^1$ setting, we now seek to extend Theorems A and B to the \lq\lq lacunary" situation, where $\mathcal P_N$ gets replaced by $\mathcal P(\La)$. When $\La$ actually has gaps, so that the set $\{k_1,\dots,k_M\}$ in \eqref{eqn:deflam} is nonempty, the space $\mathcal P(\La)$ is no longer writable as $K_1(\ph)$ and a new method is needed. Anyhow, one quick observation is that whenever a polynomial $p\in\mathcal P(\La)$ is an extreme (resp., exposed) point of $\text{\rm ball}(\mathcal P_N)$, it will be also extreme (resp., exposed) for $\text{\rm ball}(\mathcal P(\La))$. Thus, the interesting case is that where condition (ii.A) (resp., (ii.B)) is violated. 

\par There are several types of difficulties we have to face when moving from $\mathcal P_N$ to $\mathcal P(\La)$. Most notably, dividing a polynomial from $\mathcal P(\La)$ by one of its elementary factors, we cannot expect the quotient to be in $\mathcal P(\La)$. For instance, if $\La_N:=\{0,N\}$ with an integer $N\ge2$, then the function $z\mapsto1-z^N$ is in $\mathcal P(\La_N)$, but the ratio 
$$\f{1-z^N}{1-z}=1+z+\dots+z^{N-1}$$
is not. For similar reasons, a polynomial from $\mathcal P(\La)$ that vanishes at a point $a\in\D$ need not be divisible by the elementary Blaschke factor $(z-a)/(1-\ov az)$; again, the quotient may well be in $\mathcal P_N\setminus\mathcal P(\La)$. Nor is $\mathcal P(\La)$ preserved by the map $p\mapsto p^*$, except when $\La$ has the appropriate symmetry property. 

\par Besides, the $L^1$ norms of lacunary polynomials tend to be rather unhandy. To illustrate this by a historical example, we mention the notoriously wicked conjecture of Littlewood concerning the magnitude of $\left\|\sum_{k\in\Lambda}z^k\right\|_1$, with $\La$ as above. Specifically, it was conjectured---and eventually proved, after a few decades---that this quantity is bounded below by an absolute constant times $\log(\#\La)$. Two different proofs were given, one in \cite{Kon} and the other in \cite{MPS}. (The latter actually deals with generic lacunary polynomials; see also \cite{DArk} for related results.) 

\par The plan for the rest of the paper is as follows. In Sections 2 and 3, we state our main theorems that characterize the extreme and exposed points, respectively, among the unit-norm polynomials in $\mathcal P(\La)$. In both cases, the statements are followed by a brief discussion, and some examples are provided. In Section 4, we collect a few auxiliary lemmas to lean upon later. Finally, we prove our results in Sections 5 and 6. 

\par We conclude this introduction with a couple of open questions that puzzle us. First, what happens to our results in higher dimensions (i.e., with $\T^d$ in place of $\T$, the role of $\La$ being played by a finite set of multi-indices in $\Z^d$)? Second, what about Paley--Wiener type analogues of $\mathcal P(\La)$ on the real line? Here, the idea would be to replace $\La$ by a compact set $K\subset\R$ and consider the space of all entire functions $f$ with $\int_\R|f(x)|\,dx<\infty$ whose Fourier transform is supported on $K$. So far, only the classical---i.e., nonlacunary---case (where $K$ is an interval) has been studied in this connection; see \cite[Section 5]{DMRL2000}. 

\section{Extreme points: criterion and examples}

Throughout, $\La$ will be a fixed set of the form \eqref{eqn:deflam} and $\mathcal P(\La)$ will be viewed as a subspace of $L^1$, normed by \eqref{eqn:lonenorm}. To determine whether a unit-norm polynomial $p$ from $\mathcal P(\La)$ is an extreme point of the unit ball, we first cook up a certain matrix $\mathfrak M=\mathfrak M(p)$ from it; the construction will be described in a moment. This done, the answer will be stated in terms of the rank of $\mathfrak M$. 

\par Given a polynomial $p\in\mathcal P(\La)$ with $\|p\|_1=1$, consider also the associated polynomial $p^*(\in\mathcal P_N)$ defined by \eqref{eqn:pstar}. Now let $a_1,\dots,a_n$ be an enumeration of the common zeros of $p$ and $p^*$ lying in $\D$. It is understood that the $a_j$'s are pairwise distinct, and we attach to them the positive integers
$$m_j:=\min\left\{\text{\rm mult}(a_j,p),\,\text{\rm mult}(a_j,p^*)\right\},\qquad j=1,\dots,n,$$
where $\text{\rm mult}(a_j,\cdot)$ is the multiplicity of zero at $a_j$ for the polynomial in question. Finally, we put 
$$m:=\sum_{j=1}^nm_j\qquad\text{\rm and}\qquad s:=N-2m.$$ 
\par We then introduce the polynomial
\begin{equation}\label{eqn:ggg}
G(z):=\prod_{j=1}^n(z-a_j)^{m_j}(1-\ov a_jz)^{m_j}
\end{equation}
(which clearly divides both $p$ and $p^*$) along with the ratio
\begin{equation}\label{eqn:rrr}
R(z):=p(z)/G(z),
\end{equation}
which is also a polynomial. The factorization $p=GR$ that arises will occasionally be referred to as {\it canonical}. Furthermore, we observe that 
\begin{equation}\label{eqn:degrles}
\deg R\le s.
\end{equation}
Indeed, if none of the $a_j$'s is zero, then \eqref{eqn:degrles} follows from the facts that $\deg p\le N$ and $\deg G=2m$. Otherwise, we may assume that $a_1=0$, in which case $\deg p\le N-m_1$ (because $p^*$ is divisible by $z^{m_1}$) and $\deg G=2m-m_1$, so \eqref{eqn:degrles} is again valid. 
\par Letting 
\begin{equation}\label{eqn:defofck}
C_k:=\widehat R(k),\qquad k\in\Z, 
\end{equation}
we therefore have
\begin{equation}\label{eqn:rofz}
R(z)=\sum_{k=0}^sC_kz^k,
\end{equation}
while $C_k=0$ for all $k\in\Z\setminus[0,s]$. We now define 
\begin{equation}\label{eqn:defakbk}
A(k):=\text{\rm Re}\,C_k,\qquad B(k):=\text{\rm Im}\,C_k\qquad(k\in\Z)
\end{equation}
and consider, for $j=1,\dots,M$ and $l=0,\dots,m$, the numbers 
\begin{equation}\label{eqn:abplus}
A^+_{j,l}:=A(k_j+l-m)+A(k_j-l-m),\qquad B^+_{j,l}:=B(k_j+l-m)+B(k_j-l-m)
\end{equation}
and
\begin{equation}\label{eqn:abminus}
A^-_{j,l}:=A(k_j+l-m)-A(k_j-l-m),\qquad B^-_{j,l}:=B(k_j+l-m)-B(k_j-l-m),
\end{equation}
the integers $k_j$ being the same as in \eqref{eqn:deflam}. From these, we build the $M\times(m+1)$ matrices 
\begin{equation}\label{eqn:plusmat}
\mathcal A^+:=\left\{A^+_{j,l}\right\},\qquad\mathcal B^+:=\left\{B^+_{j,l}\right\}
\end{equation}
and the $M\times m$ matrices
\begin{equation}\label{eqn:minusmat}
\mathcal A^-:=\left\{A^-_{j,l}\right\},\qquad
\mathcal B^-:=\left\{B^-_{j,l}\right\}.
\end{equation}
Here, the row index $j$ always runs from $1$ to $M$, while the column index $l$ runs from $0$ to $m$ for each of the \lq\lq plus-matrices" \eqref{eqn:plusmat}, and from $1$ to $m$ for each of the \lq\lq minus-matrices" \eqref{eqn:minusmat}. 
\par Finally, we need the block matrix 
\begin{equation}\label{eqn:blockmatrix}
\mathfrak M:=
\begin{pmatrix}
\mathcal A^+ & \mathcal B^- \\
\mathcal B^+ & -\mathcal A^-
\end{pmatrix},
\end{equation}
which has $2M$ rows and $2m+1$ columns. 

\begin{thm}\label{thm:extremepoints} Suppose that $p\in\mathcal P(\La)$ and $\|p\|_1=1$. Then $p$ is an extreme point of $\text{\rm ball}(\mathcal P(\La))$ if and only if $\text{\rm rank}\,\mathfrak M=2m$.
\end{thm}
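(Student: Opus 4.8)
The plan is to characterize extreme points of $\text{\rm ball}(\mathcal P(\La))$ via the standard principle that $p$ (with $\|p\|_1=1$) fails to be extreme precisely when there is a nonzero $h\in\mathcal P(\La)$ such that $\|p\pm h\|_1\le1$, or equivalently such that $\|p+th\|_1=1$ for all small real $t$. Writing $p=|p|\,u$ with $u=p/|p|$ the unimodular argument function (well-defined a.e.\ since $p$ is a polynomial, hence nonzero a.e.\ on $\T$), one has $\|p+th\|_1=\frac1{2\pi}\int_\T|p+th|\,|d\ze|$, and differentiating at $t=0$ shows that the first-order condition for $p$ to lie on a segment in the sphere is $\Re\int_\T\ov u\,h\,|d\ze|=0$; but in fact (as in the de Leeuw--Rudin argument) one needs the exact identity $\|p\pm h\|_1=1$, which forces $|p+h|+|p-h|=2|p|$ a.e., and since $|p+h|+|p-h|\ge 2|p|$ always, with equality iff $h=\la p$ a.e.\ for a real-valued function $\la$ with $|\la|\le1$. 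So $p$ is non-extreme iff there exists a bounded real measurable $\la$, not a.e.\ zero, with $\la p\in\mathcal P(\La)$. The problem thus reduces to: when does the space $\{\la\in L^\infty_{\R}:\la p\in\mathcal P(\La)\}$ contain a nonconstant-by-zero, i.e.\ nonzero, element?

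Next I would translate the condition $\la p\in\mathcal P(\La)$ into a finite system of linear equations. Since $p\in\mathcal P_N\sb\mathcal P(\La)$, the product $\la p$ lies in $\mathcal P(\La)$ iff (a) $\text{\rm spec}(\la p)\sb\{0,\dots,N\}$ and (b) $\widehat{\la p}(k_j)=0$ for $j=1,\dots,M$. The key structural input is the canonical factorization $p=GR$ from the excerpt, together with the observation that a real density $\la$ making $\la p$ a polynomial must essentially be a ratio of the form $\la=q/p$ for a polynomial $q$ with the symmetry $q^*=q$ (up to a unimodular constant) and with $\deg q\le N$; indeed $\la=\ov{\la}$ forces $\ov{q/p}=q/p$ on $\T$, i.e.\ $q^*/p^*=q/p$, so $q\,p^*=q^*\,p$, and the common zeros of $p,p^*$ (encoded by $G$) must divide $q$, giving $q=GS$ with $S^*=S$ and $\deg S\le s$. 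Then $\la=S/R$, and the constraint $\la p=GS\in\mathcal P(\La)$ becomes: $GS$ has vanishing Fourier coefficients at $k_1,\dots,k_M$. Expanding $G$ near its relation to $\deg$ and using $\deg G=2m$, the coefficient of $z^{k_j}$ in $GS$ is a linear combination of the coefficients of $S$ at indices in a window of width $2m+1$ centered appropriately; writing $S(z)=\sum C'_\ell z^\ell$ with the self-inversive condition pairing $C'_\ell$ with $\ov{C'_{s-\ell}}$, and matching with the definitions \eqref{eqn:abplus}--\eqref{eqn:abminus} of $A^\pm_{j,l},B^\pm_{j,l}$, one recognizes that the $M$ complex (hence $2M$ real) equations $\widehat{GS}(k_j)=0$ are exactly $\mathfrak M x=0$ for a real vector $x\in\R^{2m+1}$ carrying the real and imaginary parts of the free coefficients of $S$.

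Granting that bookkeeping, the equivalence is immediate: $p$ is non-extreme iff the system $\mathfrak M x=0$ has a nonzero solution, i.e.\ iff $\dim\ker\mathfrak M\ge1$, i.e.\ iff $\text{\rm rank}\,\mathfrak M\le 2m$. Since $\mathfrak M$ has $2m+1$ columns, $\text{\rm rank}\,\mathfrak M\le 2m$ always, and the genuine condition is that it be \emph{exactly} $2m$ would be automatic — so one must be careful: the correct reading is that $\text{\rm rank}\,\mathfrak M=2m$ forces $\dim\ker\mathfrak M=1$, and that one-dimensional kernel corresponds to the trivial solution $S=\const\cdot R$ (giving $\la=\const$, which does \emph{not} produce a genuine perturbation since it only rescales $p$). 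Thus the solution space of $\mathfrak M x=0$ always contains the line spanned by the coefficient vector of $R$ itself; $p$ fails to be extreme iff that kernel is strictly larger, i.e.\ $\dim\ker\mathfrak M\ge 2$, i.e.\ $\text{\rm rank}\,\mathfrak M\le 2m-1$. Hence $p$ is extreme iff $\text{\rm rank}\,\mathfrak M=2m$. The last paragraph, checking that the tautological solution $S=R$ (equivalently $\la\equiv1$) accounts for exactly one dimension and that every other solution yields an admissible real $\la$ with $|\la|\le1$ after rescaling, is where I expect the main obstacle: one must verify that a nonzero solution $S$ of the homogeneous system not proportional to $R$ does give $\la=S/R$ bounded on $\T$ (which needs that $S$ vanishes on the zeros of $R$ on $\T$ to the right order — true because $S^*=S$ and $R^*$ shares those zeros, or because one may pass to $p\pm\eps GS$ directly and check $\|p\pm\eps GS\|_1\le1$ for small $\eps$ without forming the quotient), and conversely that the freedom in $\la$ is faithfully captured by $S$. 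I would handle the boundedness subtlety by working with the perturbation $h=\eps GS$ throughout and invoking the equality case $|p+h|+|p-h|=2|p|\iff h/p\in\R$ a.e., rather than dividing.
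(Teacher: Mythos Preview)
Your overall strategy---reduce to finding nonconstant real bounded $\lambda$ with $\lambda p\in\mathcal P(\Lambda)$, then parametrize such $\lambda$ by a finite-dimensional linear space and identify the trivial solution $\lambda\equiv\text{const}$ as one line in the kernel---matches the paper exactly. The gap is in the factorization step: you claim that $q p^*=q^*p$ forces $G\mid q$, giving $q=GS$ with $S$ self-inversive of degree $\le s$. This is backwards. Take $N=2$, $p(z)=(z-\tfrac12)(1-\tfrac12 z)=G(z)$, $R\equiv1$, so $m=1$, $s=0$; here $p=p^*$, so the relation $qp^*=q^*p$ says only $q=q^*$, i.e.\ $q(z)=a+bz+\bar a z^2$, and such $q$ need not vanish at $\tfrac12$. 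Hence $G\nmid q$ in general.

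The correct divisibility is $R\mid q$, and the paper proves it in two steps: from $qp^*=q^*p$ and $\Phi=\text{GCD}(p,p^*)=G\mathcal T$ (where $\mathcal T$ collects the zeros on $\T$) one gets that $\Psi:=R/\mathcal T$ divides $q$; separately, boundedness of $h=q/p$ on $\T$ forces $\mathcal T\mid q$; since $\Psi$ and $\mathcal T$ are coprime, $R=\Psi\mathcal T$ divides $q$. Writing $q=QR$ with $\deg Q\le 2m$, the reality of $h=Q/G$ then says $z^{-m}Q$ is a real trigonometric polynomial, which is exactly $2m+1$ real parameters---matching the $2m+1$ columns of $\mathfrak M$. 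Note that the entries of $\mathfrak M$ are built from the Fourier coefficients of $R$, not of $G$, so your proposed system (convolution with $\widehat G$) would not be $\mathfrak M x=0$ at all. A pleasant byproduct of the correct factorization is that your boundedness worry disappears: $h=Q/G$ and $G$ has \emph{no} zeros on $\T$, so every solution of the linear system automatically yields an admissible bounded real $h$.
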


\par Now, since $\text{\rm rank}\,\mathfrak M\le\min(2M,2m+1)$, the following consequence is immediate. 

\begin{cor}\label{cor:immcor} If $p$ is a unit-norm polynomial in $\mathcal P(\La)$ satisfying $M<m$, then $p$ is a non-extreme point of $\text{\rm ball}(\mathcal P(\La))$. 
\end{cor}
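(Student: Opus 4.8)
The plan is to derive the corollary directly from Theorem~\ref{thm:extremepoints}, which identifies extreme points of $\text{\rm ball}(\mathcal P(\La))$ with the unit-norm polynomials $p$ for which $\text{\rm rank}\,\mathfrak M=2m$. Since the statement to be proved merely asserts that this rank condition fails under the hypothesis $M<m$, the whole argument reduces to a dimension count on the matrix $\mathfrak M$ introduced in \eqref{eqn:blockmatrix}.

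First I would recall that, by construction, $\mathfrak M$ has exactly $2M$ rows (and $2m+1$ columns). Consequently its rank can never exceed the number of its rows, so $\text{\rm rank}\,\mathfrak M\le 2M$. Now assume $M<m$; since $M$ and $m$ are nonnegative integers, this forces $M\le m-1$ and hence $2M\le 2m-2<2m$. Combining the two inequalities gives $\text{\rm rank}\,\mathfrak M\le 2M<2m$, so in particular $\text{\rm rank}\,\mathfrak M\ne 2m$. Invoking the criterion of Theorem~\ref{thm:extremepoints}, which applies because $\|p\|_1=1$, we conclude at once that $p$ fails to be an extreme point of $\text{\rm ball}(\mathcal P(\La))$, as claimed.

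There is essentially no obstacle to surmount here: the corollary is a pure bookkeeping consequence of the theorem, and all the genuine content---the construction of $\mathfrak M$ and the proof that its rank governs extremality---is already packaged into Theorem~\ref{thm:extremepoints}. The only point worth verifying is that the row count of $\mathfrak M$ is genuinely $2M$ and not something larger; this is transparent from the block form \eqref{eqn:blockmatrix}, each of the four blocks $\mathcal A^\pm,\mathcal B^\pm$ having precisely $M$ rows, so that the two stacked blocks contribute $2M$ rows in total.
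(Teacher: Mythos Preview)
Your proof is correct and matches the paper's own argument essentially verbatim: the paper simply notes that $\text{\rm rank}\,\mathfrak M\le\min(2M,2m+1)$ and declares the corollary immediate from Theorem~\ref{thm:extremepoints}. Your explicit dimension count and verification of the row count of $\mathfrak M$ are faithful elaborations of that one-line remark.
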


\par Roughly speaking, this means that if $p$ is not too lacunary (in the sense that $M$, the number of forbidden frequencies in \eqref{eqn:deflam}, is not too large), then the situation is similar to that in the nonlacunary case, as described by Theorem A. 

\par On the other hand, whenever a unit-norm polynomial $p\in\mathcal P(\La)$ happens to be an extreme point of $\text{\rm ball}(\mathcal P_N)$, it will be extreme for $\text{\rm ball}(\mathcal P(\La))$ as well. Of course, this fact follows at once from the inclusion $\mathcal P(\La)\subset\mathcal P_N$, but we can also verify it by comparing the characterizations from Theorem A and Theorem \ref{thm:extremepoints}. Indeed, in this case $p$ and $p^*$ have no common zeros in $\D$, whence $m=0$. Accordingly, the polynomials \eqref{eqn:ggg} and \eqref{eqn:rrr} in the canonical factorization take the form $G=1$ and $R=p$. In particular, the coefficients $C_{k_j}=A(k_j)+iB(k_j)$ in \eqref{eqn:rofz} are then null for $j=1,\dots,M$. This means that the blocks $\mathcal A^+$ and $\mathcal B^+$ in \eqref{eqn:blockmatrix} reduce to zero columns, whereas the other two blocks are absent, so we have $\text{\rm rank}\,\mathfrak M=0(=2m)$. 

\par More interesting examples are to be found among those polynomials which are non-extreme points of $\text{\rm ball}(\mathcal P_N)$ and satisfy $M\ge m$. Two such instances will now be considered. 

\begin{exmp} Let 
$$p(z)=\ga\left(z-\f12\right)(2-z)\left(1+z^4\right),$$
where $\ga>0$ is the number that ensures $\|p\|_1=1$. Clearly, $p\in\mathcal P_6$ and $\widehat p(3)=0$, so that $p\in\mathcal P(\La)$ with $\La=\{0,1,2,4,5,6\}$. This last set can obviously be written as \eqref{eqn:deflam}, where $N=6$, $M=1$ and $k_1=3$. Also, since $p\left(\f12\right)=p(2)=0$ (both zeros being simple), the polynomials $p$ and $p^*$ have a common zero (of multiplicity $1$) at the point $a_1=\f12$; moreover, they have no other common zeros in $\D$. The corresponding parameters are, therefore, $n=m_1=m=1$. The canonical factorization $p=GR$, as determined by \eqref{eqn:ggg} and \eqref{eqn:rrr}, is in this case obtained by taking 
\begin{equation}\label{eqn:gggonehalf}
G(z)=\left(z-\f12\right)\left(1-\f12z\right)
\end{equation}
and
$$R(z)=2\ga\left(1+z^4\right).$$
Recalling the notations \eqref{eqn:defofck} and \eqref{eqn:defakbk}, we now have $C_k=0$ for all $k\in\Z\setminus\{0,4\}$, while $C_0=C_4=2\ga$. Equivalently, $A(k)=0$ for all $k\in\Z\setminus\{0,4\}$ and 
$$A(0)=A(4)=2\ga,$$ 
whereas $B(k)=0$ for all $k\in\Z$. It follows immediately that the matrix elements \eqref{eqn:abplus} with $(j,l)\in\{(1,0),(1,1)\}$, as well as \eqref{eqn:abminus} with $(j,l)=(1,1)$, are all null. Consequently, $\mathfrak M$ is the zero matrix (of size $2\times3$) and its rank is $0$. This number being different from $2m(=2)$, we see from Theorem \ref{thm:extremepoints} that $p$ is a non-extreme point of $\text{\rm ball}(\mathcal P(\La))$. 
\end{exmp}

\begin{exmp} Now let 
$$p(z)=\de\left(z-\f12\right)\left(8-z^3\right),$$
where $\de>0$ is the normalizing constant that ensures $\|p\|_1=1$. This time, we have $p\in\mathcal P_4$ and $\widehat p(2)=0$, so that $p\in\mathcal P(\La)$ with $\La=\{0,1,3,4\}$. We also write this set $\La$ in the form \eqref{eqn:deflam}, putting $N=4$, $M=1$ and $k_1=2$. As in the previous example, the polynomials $p$ and $p^*$ have a common zero (of multiplicity $1$) at the point $a_1=\f12$ and no other common zeros in $\D$. Thus, $n=m_1=m=1$. As regards the canonical factorization $p=GR$, one factor is again given by \eqref{eqn:gggonehalf}, while the other is 
$$R(z)=2\de\left(4+2z+z^2\right).$$
The coefficients \eqref{eqn:defofck} are thereby known, as are the numbers \eqref{eqn:defakbk}, and we use these to compute the matrix entries \eqref{eqn:abplus} and \eqref{eqn:abminus}. Eventually, we find that 
$$\mathfrak M:=2\de
\begin{pmatrix}
4 & 5 & 0 \\
0 & 0 & 3
\end{pmatrix},
$$
and so $\text{\rm rank}\,\mathfrak M=2(=2m)$. Consequently, Theorem \ref{thm:extremepoints} tells us that $p$ is an extreme point of $\text{\rm ball}(\mathcal P(\La))$. 
\end{exmp}

\section{Exposed points: criterion and examples}

Now we move on to describing the exposed points of $\text{\rm ball}(\mathcal P(\La))$. The criterion, to be stated in terms of the appropriate matrix $\widetilde{\mathfrak M}$ built from the polynomial in question, will be close in spirit to Theorem \ref{thm:extremepoints}. However, there are some adjustments to be made and some complications to be dealt with. 
\par Once again, we fix a unit-norm element $p$ of $\mathcal P(\La)$ and we recall the canonical factorization $p=GR$ from the preceding section, the two factors being given by \eqref{eqn:ggg} and \eqref{eqn:rrr}. Furthermore, let $\ze_1,\dots,\ze_\nu$ be the distinct zeros of $p$ lying on $\T$, and let $\la_1,\dots,\la_\nu$ be their respective multiplicities. We also need the (nonnegative) integers 
\begin{equation}\label{eqn:mujlaj}
\mu_j:=[\la_j/2],\qquad j=1,\dots,\nu,
\end{equation}
where $[\cdot]$ denotes integral part, as well as the numbers
\begin{equation}\label{eqn:mums}
\mu:=\sum_{j=1}^\nu\mu_j,\qquad\widetilde m:=m+\mu,\qquad\widetilde s:=N-2\widetilde m.
\end{equation}
We then define the polynomials $G_0$ and $\widetilde G$ by putting 
\begin{equation}\label{eqn:gnought}
G_0(z):=\prod_{j=1}^\nu(z-\ze_j)^{\mu_j}(1-\ov\ze_jz)^{\mu_j}
\end{equation}
and 
\begin{equation}\label{eqn:newfactone}
\widetilde G:=GG_0.
\end{equation}
Rewriting \eqref{eqn:gnought} in the form
\begin{equation}\label{eqn:gnoughtbis}
G_0(z)=\prod_{j=1}^\nu(-\ov\ze_j)^{\mu_j}(z-\ze_j)^{2\mu_j}
\end{equation}
and noting that $0\le2\mu_j\le\la_j$, we see that $p$ is divisible by $G_0$ and hence also by $\widetilde G$ (because $G$ and $G_0$ are relatively prime). The function
\begin{equation}\label{eqn:newfacttwo}
\widetilde R:=p/\widetilde G
\end{equation}
is therefore a polynomial; moreover, since $\widetilde R=R/G_0$ and $\deg G_0=2\mu$, we deduce from \eqref{eqn:degrles} that 
\begin{equation}\label{eqn:degrtillestil}
\deg\widetilde R\le s-2\mu=N-2m-2\mu=\widetilde s.
\end{equation}

\par Our further steps towards constructing the matrix $\widetilde{\mathfrak M}$ are quite similar to what we did previously to arrive at \eqref{eqn:blockmatrix}. Namely, 
we write $\widetilde C_k$ (with $k\in\Z$) for the $k$th Fourier coefficient of $\widetilde R$, so that 
\begin{equation}\label{eqn:rofztil}
\widetilde R(z)=\sum_{k=0}^{\widetilde s}\widetilde C_kz^k,
\end{equation}
and put
\begin{equation}\label{eqn:defakbktil}
\widetilde A(k):=\text{\rm Re}\,\widetilde C_k,
\qquad\widetilde B(k):=\text{\rm Im}\,\widetilde C_k\qquad(k\in\Z).
\end{equation}
Next we define, for $j=1,\dots,M$ and $l=0,\dots,\widetilde m$, the numbers 
\begin{equation}\label{eqn:abplustil}
\widetilde A^+_{j,l}:=\widetilde A(k_j+l-\widetilde m)+\widetilde A(k_j-l-\widetilde m),\qquad
\widetilde B^+_{j,l}:=\widetilde B(k_j+l-\widetilde m)+\widetilde B(k_j-l-\widetilde m)
\end{equation}
and
\begin{equation}\label{eqn:abminustil}
\widetilde A^-_{j,l}:=\widetilde A(k_j+l-\widetilde m)-\widetilde A(k_j-l-\widetilde m),\qquad
\widetilde B^-_{j,l}:=\widetilde B(k_j+l-\widetilde m)-\widetilde B(k_j-l-\widetilde m).
\end{equation}
This done, we build the $M\times(\widetilde m+1)$ matrices 
\begin{equation}\label{eqn:plusmattil}
\widetilde{\mathcal A}^+:=\left\{\widetilde A^+_{j,l}\right\},\qquad
\widetilde{\mathcal B}^+:=\left\{\widetilde B^+_{j,l}\right\}
\end{equation}
and the $M\times\widetilde m$ matrices
\begin{equation}\label{eqn:minusmattil}
\widetilde{\mathcal A}^-:=\left\{\widetilde A^-_{j,l}\right\},\qquad
\widetilde{\mathcal B}^-:=\left\{\widetilde B^-_{j,l}\right\}.
\end{equation}
Here, the row index $j$ always runs from $1$ to $M$, while the column index $l$ runs from $0$ to $\widetilde m$ for each of the two matrices in \eqref{eqn:plusmattil}, and from $1$ to $\widetilde m$ for each of those in \eqref{eqn:minusmattil}. Finally, the block matrix $\widetilde{\mathfrak M}$, with $2M$ rows and $2\widetilde m+1$ columns, is defined by 
\begin{equation}\label{eqn:blockmatrixtil}
\widetilde{\mathfrak M}:=
\begin{pmatrix}
\widetilde{\mathcal A}^+ & \widetilde{\mathcal B}^- \\
\widetilde{\mathcal B}^+ & -\widetilde{\mathcal A}^-
\end{pmatrix}.
\end{equation}

\par Yet another bit of terminology and notation will be needed. Given an integer $d\ge0$ and a vector 
\begin{equation}\label{eqn:albevector}
(\al,\be):=(\al_0,\al_1,\dots,\al_d,\be_1,\dots,\be_d)\in\R^{2d+1},
\end{equation}
we say that $(\al,\be)$ is a {\it plus-vector} if 
\begin{equation}\label{eqn:defplusvector}
\al_0+\sum_{k=1}^d\left(\al_k\cos kt-\be_k\sin kt\right)\ge0
\quad\text{\rm for all }\,t\in(-\pi,\pi].
\end{equation}
Also, for a subspace $V\subset\R^{2d+1}$, we define its {\it plus-dimension} $\text{\rm dim}_+V$ as the maximum number of linearly independent plus-vectors in $V$. 

\par Specifically, relevant to our problem is the subspace 
\begin{equation}\label{eqn:nullspacetil}
\widetilde{\mathcal N}:=\text{\rm ker}\,\widetilde{\mathfrak M}
\end{equation}
(in $\R^{2\widetilde m+1}$), the kernel of the linear map $\widetilde{\mathfrak M}:\R^{2\widetilde m+1}\to\R^{2M}$ given by \eqref{eqn:blockmatrixtil}. The main result of this section can now be stated as follows. 

\begin{thm}\label{thm:exposedpoints} Suppose that $p\in\mathcal P(\La)$ and $\|p\|_1=1$. Then $p$ is an exposed point of $\text{\rm ball}(\mathcal P(\La))$ if and only if $\text{\rm dim}_+\widetilde{\mathcal N}=1$.
\end{thm}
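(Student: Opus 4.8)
\textbf{Proof proposal for Theorem \ref{thm:exposedpoints}.}
The plan is to follow the same duality scheme that should underlie the proof of Theorem \ref{thm:extremepoints}, but refined so as to detect uniqueness of the contact point rather than mere non-degeneracy. Recall that a unit-norm $p\in\mathcal P(\La)$ is exposed iff there is a norm-one functional $\phi$ on $\mathcal P(\La)$ with $\{f\in\text{\rm ball}(\mathcal P(\La)):\phi(f)=1\}=\{p\}$. By Hahn--Banach and the description of $(L^1)^*$, such a $\phi$ is represented by some $\psi\in L^\infty$ with $\|\psi\|_\infty=1$ (modulo the annihilator of $\mathcal P(\La)$), and the normalization $\phi(p)=1=\|p\|_1$ forces $\psi=\ov{p}/|p|$ a.e.\ on $\{p\ne0\}$; on the zero set of $p$ the values of $\psi$ are free subject to $|\psi|\le1$. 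Thus the first step is to record that, writing $\psi=\ov p/|p|+h$ where $h$ is supported on $\{p=0\}$ (a finite set, hence $h$ can only contribute through the annihilator), the condition $\|\psi\|_\infty\le1$ together with $\psi\perp(\mathcal P(\La)\ominus\C p)$ is exactly what makes $p$ extreme, and $p$ is exposed iff additionally no $q\in\mathcal P(\La)$, other than multiples of $p$, satisfies $\Re\int_\T\psi\ov q\,\ge\|q\|_1$ with equality --- equivalently, iff $p$ is the unique unit-ball element with $\int_\T(\ov p/|p|)\,q=\|q\|_1$.

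The second step is to translate the extremality/uniqueness conditions into a statement about real trigonometric polynomials, exactly as the matrix $\widetilde{\mathfrak M}$ is designed to do. A competing direction is a real function $u$ (the real part of $q/p$, suitably interpreted) such that $p(1+\eps u)\in\mathcal P(\La)$ for small real $\eps$ and $\|p(1+\eps u)\|_1\le 1$; since $\|p\|_1=1$ and $\int|p|(1+\eps u)=1+\eps\int|p|u$, one needs $\int_\T|p|u\,|d\ze|=0$, and then $\|p(1+\eps u)\|_1=1+o(\eps)$ is automatic, so the binding requirement is the membership $p(1+\eps u)\in\mathcal P(\La)$, i.e.\ $\widehat{pu}(k_j)=0$ for $j=1,\dots,M$. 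Here is where the canonical factorization $p=GR$ and the on-$\T$ factor $G_0$ enter: because $|G|=|z-a_j|^2\cdots$ is a nonnegative trig polynomial and $|p|=|G_0|\,|G|\,|\widetilde R|$, one writes $u=v/(|G||G_0|)$ with $v$ a trig polynomial of the right degree, so that the conditions $\widehat{pu}(k_j)=0$ become linear conditions on the coefficients of a plus-vector $(\al,\be)\in\R^{2\widetilde m+1}$ encoding the nonnegative part; the matrix of these linear conditions is precisely $\widetilde{\mathfrak M}$. Consequently the admissible perturbation directions correspond to plus-vectors in $\widetilde{\mathcal N}=\ker\widetilde{\mathfrak M}$, and $p$ fails to be exposed iff there is such a plus-vector not proportional to the trivial one coming from $u\equiv\const$ --- i.e.\ iff $\text{\rm dim}_+\widetilde{\mathcal N}\ge2$. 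The trivial direction $u\equiv1$ always yields a plus-vector in $\widetilde{\mathcal N}$ (it corresponds to rescaling $p$), which is why the threshold is $\text{\rm dim}_+\widetilde{\mathcal N}=1$ rather than $=0$.

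The third step is to check that this correspondence is genuinely a bijection between \emph{geometric} contact points $q\in\text{\rm ball}(\mathcal P(\La))$ with $\int(\ov p/|p|)q=1$ and plus-vectors in $\widetilde{\mathcal N}$ up to scaling: one direction builds $u$ from $q$ via $q=p\cdot u$ on $\{p\ne0\}$ and argues, using that $q/p$ is a rational function with controlled poles, that $u$ extends to the required trig-polynomial-over-$|G||G_0|$ form; the other direction builds $q=pu$ from a plus-vector and must verify $q\in\mathcal P(\La)$ (the kernel condition), $q\in L^1$ (clear, $q$ is a polynomial once the division by $G G_0$ is justified), and $\|q\|_1\le\|p\|_1$ with the contact identity (from $u\ge0$ after the normalization, giving $|q|=|p|u$ and $\int|p|u=\int|p|\cdot(\text{const term})$). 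The role of $\mu_j=[\la_j/2]$ is exactly to allow the factor $G_0$ to absorb half of each boundary zero so that $u\ge0$ near $\T$ is attainable while $pu$ stays a polynomial --- this is the analogue, for exposed points, of the simple-zeros clause in Theorem B.

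The step I expect to be the main obstacle is the rigorous handling of the zero set of $p$ on $\T$ and the passage between the analytic inequality $\|p(1+\eps u)\|_1\le1$ for $\eps$ of one sign versus both signs. For extreme points one only needs a two-sided segment, so $u$ may change sign; for exposed points one must rule out \emph{one-sided} contact as well, which forces the positivity constraint $u\ge0$ a.e.\ and hence the plus-vector formalism --- and one has to be careful that near a boundary zero $\ze_j$ of odd multiplicity the leftover single factor $|z-\ze_j|$ does not destroy either the polynomial nature of $pu$ or the sign of $u$. Controlling this, and showing that \emph{every} geometric contact point arises from a plus-vector in $\widetilde{\mathcal N}$ (not just the ones coming from smooth one-parameter deformations), is the crux; I would handle it by a normal-families / weak-$*$ compactness argument combined with the finiteness of $\text{\rm spec}\,p$ and the explicit factorization $|p|=|GG_0\widetilde R|$.
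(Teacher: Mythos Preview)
Your proposal lands on the right target (a bijection, up to scaling, between nonnegative multipliers $h$ with $ph\in\mathcal P(\La)$ and plus-vectors in $\widetilde{\mathcal N}$), but it misidentifies where the work lies and offers the wrong tool for the hard step.

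First, the reduction. The perturbative framework $p(1+\eps u)$ with $\int|p|u=0$ and $\|p(1+\eps u)\|_1=1+o(\eps)$ is the extreme-point setup, not the exposed one, and it is a detour here. What you actually need is what Lemma~\ref{lem:charexposed} states outright: $p$ is exposed iff every nonnegative measurable $h$ on $\T$ with $ph\in\mathcal P(\La)$ is constant a.e. Your first paragraph does essentially reach the contact-point condition $\int(\ov p/|p|)\,q=\|q\|_1$, which is equivalent to $h:=q/p\ge0$ a.e.; so you can get there, but the $\eps$-business should be discarded.

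Second, and this is the genuine gap: given an arbitrary nonnegative $h$ with $q:=ph\in\mathcal P(\La)$, you must show that $h=\widetilde Q/\widetilde G$ with $\widetilde Q$ a \emph{polynomial of degree at most $2\widetilde m$}. Saying ``$q/p$ is rational with controlled poles'' does not yield this: nothing you have written bounds the degree of the numerator once denominators are cleared, nor forces $h$ to be a trigonometric polynomial over $|G||G_0|$. The paper accomplishes this in two strokes. (a) From $h$ real one gets $pq^*=p^*q$; a GCD argument using $\Phi=\text{GCD}(p,p^*)$ and $p=\Phi\Psi$ shows $\Psi$ divides $q$, so $\widetilde Q:=q/(S\Psi)=q/\widetilde R$ is at worst a polynomial divided by the square-free factor $S$ (simple zeros on $\T$), hence $\widetilde Q\in H^{1/2}$. (b) Since $h\ge0$ and $z^{-\widetilde m}\widetilde G\ge0$ on $\T$, one has $z^{-\widetilde m}\widetilde Q\ge0$ on $\T$; Lemma~\ref{lem:onehalf} then forces $\widetilde Q\in\mathcal P_{2\widetilde m}$. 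That $H^{1/2}$ lemma (an inner--outer/spectrum argument) is the actual engine. Your proposed ``normal-families / weak-$*$ compactness'' cannot replace it: the degree bound is a structural fact about each individual competitor $q$, not a limiting property of a family, and no compactness argument will manufacture it.
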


\par To make the analogy between this result and Theorem \ref{thm:extremepoints} more transparent, we may invoke the rank-nullity identity (see, e.g., \cite[p.\,63]{Axl}) to rewrite the condition
\begin{equation}\label{eqn:rankmeqtwom}
\text{\rm rank}\,\mathfrak M=2m 
\end{equation}
from that theorem as $\text{\rm dim}\,\mathcal N=1$. Here, $\mathcal N$ is the kernel of the linear map $\mathfrak M:\R^{2m+1}\to\R^{2M}$ with matrix \eqref{eqn:blockmatrix}, and $\text{\rm dim}\,\mathcal N$ is the (usual) dimension of $\mathcal N$. In the current setting, a suitably adjusted version of \eqref{eqn:rankmeqtwom} provides a sufficient condition for $p$ to be an exposed point. 

\begin{cor}\label{cor:suffcondexposed} Suppose that $p\in\mathcal P(\La)$ and $\|p\|_1=1$. If $\text{\rm rank}\,\widetilde{\mathfrak M}=2\widetilde m$, then $p$ is an exposed point of $\text{\rm ball}(\mathcal P(\La))$.
\end{cor}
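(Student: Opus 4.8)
The plan is to derive Corollary~\ref{cor:suffcondexposed} directly from Theorem~\ref{thm:exposedpoints} by showing that the hypothesis $\operatorname{rank}\widetilde{\mathfrak M}=2\widetilde m$ forces $\operatorname{dim}_+\widetilde{\mathcal N}=1$. First I would invoke the rank--nullity identity, exactly as in the paragraph preceding the corollary: since $\widetilde{\mathfrak M}$ acts on $\R^{2\widetilde m+1}$ and has rank $2\widetilde m$, its kernel $\widetilde{\mathcal N}$ is one-dimensional. So the only thing left to check is that $\operatorname{dim}_+\widetilde{\mathcal N}=1$ rather than $0$; that is, that the (unique up to scalar) nonzero vector spanning $\widetilde{\mathcal N}$, or rather some scalar multiple of it, is a plus-vector in the sense of \eqref{eqn:defplusvector}.

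The key observation is that $\widetilde{\mathcal N}$ is never the zero subspace, because it always contains a distinguished nonzero vector coming from $p$ itself. Concretely, I expect the canonical factorization $p=\widetilde G\widetilde R$ to furnish, via the coefficients $\widetilde C_k$ of $\widetilde R$ in \eqref{eqn:rofztil}, a vector $(\widetilde{\alpha},\widetilde{\beta})\in\R^{2\widetilde m+1}$ — built from $\widetilde A(k)$ and $\widetilde B(k)$ in a manner parallel to \eqref{eqn:abplustil}--\eqref{eqn:abminustil} — that lies in $\widetilde{\mathcal N}$ and simultaneously satisfies the positivity condition \eqref{eqn:defplusvector}. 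The positivity should come from the fact that $|p|$ is, after dividing out the modulus-one factor $\widetilde G/|\widetilde G|$ on $\T$ and the inner-type pieces, expressible through a nonnegative trigonometric polynomial; this is the same mechanism that makes the de Leeuw--Rudin argument work, and it is surely extracted in the proof of Theorem~\ref{thm:exposedpoints} or in the auxiliary lemmas of Section~4. Granting that this $p$-induced vector belongs to $\widetilde{\mathcal N}$ and is a plus-vector, we get $\operatorname{dim}_+\widetilde{\mathcal N}\ge1$ unconditionally. Combined with the one-dimensionality of $\widetilde{\mathcal N}$ forced by the rank hypothesis, we conclude $\operatorname{dim}_+\widetilde{\mathcal N}=1$, and Theorem~\ref{thm:exposedpoints} finishes the proof.

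So the proof reduces to two ingredients: (a) $\dim\widetilde{\mathcal N}=1$, which is pure linear algebra given the rank assumption; and (b) $\widetilde{\mathcal N}\neq\{0\}$ always, witnessed by a plus-vector. Ingredient (b) is the substantive point, but it is not something I would prove from scratch here — I would cite the relevant lemma from Section~4 (or the portion of the proof of Theorem~\ref{thm:exposedpoints}) that identifies the $p$-vector, verifies it lies in the kernel of $\widetilde{\mathfrak M}$, and checks its plus-vector property. The main obstacle, then, is bookkeeping rather than conceptual: one must make sure the indexing in \eqref{eqn:abplustil}--\eqref{eqn:abminustil} is matched correctly to the components $(\al_0,\dots,\al_{\widetilde m},\be_1,\dots,\be_{\widetilde m})$ of the candidate vector, so that "lies in $\widetilde{\mathcal N}$" and "is a plus-vector" both hold for the same vector and the implication $\operatorname{rank}\widetilde{\mathfrak M}=2\widetilde m\ \Longrightarrow\ \operatorname{dim}_+\widetilde{\mathcal N}=1$ is airtight.

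Finally, I would remark that this corollary is the exact analogue, for exposed points, of the passage from \eqref{eqn:rankmeqtwom} to $\dim\mathcal N=1$ for extreme points: the maximal-rank hypothesis on $\widetilde{\mathfrak M}$ is a clean, checkable sufficient condition that bypasses the delicate computation of the plus-dimension. It is strictly stronger than what Theorem~\ref{thm:exposedpoints} requires, since $\operatorname{dim}_+\widetilde{\mathcal N}=1$ can also occur when $\dim\widetilde{\mathcal N}>1$, provided the extra kernel directions fail to yield linearly independent plus-vectors; but in the generic situation the two conditions coincide.
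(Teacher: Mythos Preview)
Your approach is correct and is exactly the paper's: rank--nullity gives $\dim\widetilde{\mathcal N}=1$, the inequality $\dim_+\widetilde{\mathcal N}\ge1$ is established in the proof of Theorem~\ref{thm:exposedpoints} (as \eqref{eqn:alwaystil}), and Theorem~\ref{thm:exposedpoints} then applies. One small correction to your description of ingredient~(b): the distinguished plus-vector in $\widetilde{\mathcal N}$ is not built from the coefficients $\widetilde C_k$ of $\widetilde R$ but is the coefficient vector of $\widetilde G$ written in the form \eqref{eqn:qtileqre}, arising from the choice $h=1$ (hence $\widetilde Q=\widetilde G$) in the parametrization $h=\widetilde Q/\widetilde G$.
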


\par A few remarks concerning the concepts of plus-vector and plus-dimension are in order. Once again, let $d$ be a nonnegative integer. Along with a given vector \eqref{eqn:albevector} we consider the complex numbers
$$\ga_0=2\al_0,\quad\ga_k=\al_k+i\be_k\quad(k=1,\dots,d)$$
and then extend this collection to a two-sided sequence $\{\ga_k\}_{k\in\Z}$ by setting $\ga_{-k}=\ov\ga_k$ for $1\le k\le d$ and $\ga_k=0$ for $|k|>d$. In this notation, \eqref{eqn:defplusvector} amounts to saying that the (real) trigonometric polynomial
\begin{equation}\label{eqn:tautripol}
\tau(z):=\sum_{k=-d}^d\ga_kz^k\qquad(z\in\T)
\end{equation}
satisfies $\tau(z)\ge0$ everywhere on the circle; indeed, the left-hand side of \eqref{eqn:defplusvector} equals $\f12\tau(e^{it})$. Thus, plus-vectors are essentially the coefficient vectors of nonnegative trigonometric polynomials. In terms of the associated sequence $\{\ga_k\}$, they are characterized (see \cite{Akh}) by the familiar condition that $\{\ga_k\}$ is positive definite, meaning that $\sum_{j,k\ge0}\ga_{j-k}\xi_j\ov\xi_k\ge0$ for any finite sequence $\{\xi_k\}$ of complex numbers. 

\par It is easy to find a subspace $V$ of $\R^{2d+1}$ (for some, or any, $d$) with the property that $\text{\rm dim}_+V\ne\text{\rm dim}\,V$. To give a trivial example with $d=1$, let $V_1$ be the one-dimensional subspace in $\R^3$ spanned by the vector $(0,1,0)$. Then $V_1$ does not contain any nonzero plus-vector, so that $\text{\rm dim}_+V_1=0$, while $\text{\rm dim}\,V_1=1$. 

\par A more interesting example (with $d=2$), which is also more relevant to our topic, can be produced as follows. Let $V_2$ be the two-dimensional subspace in $\R^5$ spanned by the vectors 
$$v_1:=(1,0,-1,0,0)\quad\text{\rm and}\quad v_2:=(0,1,0,0,0).$$
The trigonometric polynomials representing $v_1$ and $v_2$ (in the sense of the above procedure, which leads from \eqref{eqn:albevector} to \eqref{eqn:tautripol}) are 
$$\tau_1(z)=-z^{-2}+2-z^2\quad\text{\rm and}\quad\tau_2(z)=z^{-1}+z,$$
respectively. Since $\tau_1(z)=|z^2-1|^2\ge0$ on $\T$, whereas $\tau_2(z)=2\text{\rm Re}\,z$ changes sign on $\T$, we see that $v_1$ is a plus-vector and $v_2$ is not. Moreover, given real numbers $c_1$ and $c_2$, the linear combination $c_1v_1+c_2v_2$ will be a plus-vector if and only if $c_1\ge0$ and $c_2=0$. Indeed, whenever $c_2\ne0$, the corresponding trigonometric polynomial $c_1\tau_1+c_2\tau_2$ takes values of opposite signs (namely, $2c_2$ and $-2c_2$) at the points $1$ and $-1$. This means that the only plus-vectors in $V_2$ are scalar multiples of $v_1$, and so $\text{\rm dim}_+V_2=1$. 

\par There is one special case where Theorem \ref{thm:exposedpoints} reduces to a simpler criterion (namely, to Theorem \ref{thm:extremepoints} from the preceding section) that does not involve the concept of plus-dimension. 

\begin{prop} Let $p\in\mathcal P(\La)$ and $\|p\|_1=1$. Assume, in addition, that $p$ has no multiple zeros on $\T$. Then $p$ is an exposed point of $\text{\rm ball}(\mathcal P(\La))$ if and only if it is an extreme point thereof. 
\end{prop}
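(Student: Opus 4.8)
The plan is to derive this proposition from Theorems \ref{thm:extremepoints} and \ref{thm:exposedpoints} by comparing the matrices $\mathfrak M$ and $\widetilde{\mathfrak M}$ and the relevant (plus-)dimensions, taking advantage of the simplification that occurs when $p$ has no multiple zeros on $\T$. Under that hypothesis we have $\la_j=1$ for every zero $\ze_j$ of $p$ on $\T$, hence $\mu_j=[\la_j/2]=0$ for all $j$, so $\mu=0$, $G_0\equiv 1$, $\widetilde G=G$, $\widetilde R=R$, $\widetilde m=m$ and $\widetilde s=s$. Consequently $\widetilde{\mathfrak M}=\mathfrak M$ and $\widetilde{\mathcal N}=\mathcal N$, the common kernel sitting in $\R^{2m+1}$. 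After this reduction, the statement to be proved becomes: under the no-multiple-zeros-on-$\T$ assumption, $\dim_+\mathcal N=1$ if and only if $\dim\mathcal N=1$ (where, as noted after Theorem \ref{thm:exposedpoints}, the latter is equivalent to $\text{\rm rank}\,\mathfrak M=2m$).

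One direction is trivial: since a plus-vector is in particular a vector, $\dim_+\mathcal N\le\dim\mathcal N$, and also $\dim_+\mathcal N\le 1$ would already force $\dim\mathcal N\le 1$ only after we know $\mathcal N$ always contains a nonzero plus-vector. So the real content is the implication ``$p$ extreme $\Rightarrow$ $p$ exposed'' in this special case, i.e. $\dim\mathcal N=1\Rightarrow\dim_+\mathcal N=1$, which holds provided the unique (up to scalar) nonzero vector spanning $\mathcal N$ can be chosen to be a plus-vector. My plan is therefore to exhibit a distinguished plus-vector lying in $\mathcal N$ for \emph{every} unit-norm $p\in\mathcal P(\La)$: namely the coefficient vector of the nonnegative trigonometric polynomial naturally attached to $p$. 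Concretely, writing $p=\widetilde G\widetilde R$ (which here is just $p=GR$), the function $|R|^2=R\ov R$ restricted to $\T$ is a nonnegative trigonometric polynomial of degree $\le s$; I expect its coefficient vector, suitably indexed, to be exactly the kind of object that (a) is a plus-vector by construction and (b) lies in $\widetilde{\mathcal N}=\ker\widetilde{\mathfrak M}$, because the defining linear relations of $\widetilde{\mathfrak M}$ encode precisely the vanishing of the Fourier coefficients of $|p|^2/|\widetilde G|^2$ at the forbidden frequencies $k_j$ — which holds since $p\in\mathcal P(\La)$ means $\widehat{p}(k_j)=0$, and $|\widetilde G|^2$ is a fixed positive weight on $\T$. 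Granting this, $\mathcal N$ always contains a nonzero plus-vector, so $\dim_+\mathcal N\ge1$ always, and when $\dim\mathcal N=1$ this forces $\dim_+\mathcal N=1$; conversely $\dim_+\mathcal N=1$ gives $\dim_+\mathcal N\le\dim\mathcal N$ and, combined with the always-present plus-vector, pins down the extreme case as well via Theorem \ref{thm:extremepoints}.

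The step I expect to be the main obstacle is verifying cleanly that the distinguished plus-vector genuinely sits in $\ker\widetilde{\mathfrak M}$, i.e. unwinding the block structure \eqref{eqn:blockmatrixtil}, the index shifts $k_j\pm l-\widetilde m$ in \eqref{eqn:abplustil}–\eqref{eqn:abminustil}, and the real/imaginary decomposition \eqref{eqn:defakbktil}, and matching them against the Fourier coefficients of $|p(\ze)|^2|\widetilde G(\ze)|^{-2}$ (equivalently of $|R|^2$, since $|\widetilde G|=|G_0|\cdot|G|$ and here $|G_0|\equiv1$). In other words, the combinatorial bookkeeping that shows ``$\widehat p(k_j)=0$ for $j=1,\dots,M$'' translates into ``$\widetilde{\mathfrak M}$ annihilates the coefficient vector of $|R|^2$'' is the crux; this is presumably done in the proofs of the two main theorems in Sections 5–6, so I would either quote the relevant computation from there or reproduce its short version. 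Once that is in hand, the proposition follows in a couple of lines. I would also add a one-sentence remark that, when $p$ \emph{does} have multiple zeros on $\T$, the plus-vector argument can fail precisely because the extra factor $G_0$ is nontrivial and the kernel can acquire vectors (like $v_2$ in the $V_2$ example above) that are not plus-vectors, which is exactly why the general criterion needs $\dim_+$ rather than $\dim$.
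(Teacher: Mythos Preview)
Your reduction is correct and matches the paper: with no multiple zeros on $\T$ one has $\mu_j=0$ for all $j$, hence $G_0\equiv1$, $\widetilde G=G$, $\widetilde R=R$, $\widetilde m=m$, and $\widetilde{\mathfrak M}=\mathfrak M$. The paper then simply invokes Corollary~\ref{cor:suffcondexposed}: extreme $\Rightarrow$ $\text{\rm rank}\,\mathfrak M=2m$ $\Rightarrow$ $\text{\rm rank}\,\widetilde{\mathfrak M}=2\widetilde m$ $\Rightarrow$ exposed. The ``only if'' direction is dismissed as the general Banach-space fact that every exposed point is extreme; your attempted dimension argument for that direction is in fact incomplete, since $\text{\rm dim}_+\mathcal N=1$ does not by itself force $\text{\rm dim}\,\mathcal N=1$ (the subspace $V_2$ discussed in Section~3 is a two-dimensional space with plus-dimension $1$).

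Your plan to unpack Corollary~\ref{cor:suffcondexposed} by exhibiting a nonzero plus-vector in $\mathcal N$ is exactly how that corollary is proved, but your specific candidate is wrong. The coefficient vector of $|R|^2$ cannot serve: $|R|^2$ is a trigonometric polynomial of degree $\le s=N-2m$, whereas the vectors on which $\mathfrak M$ acts live in $\R^{2m+1}$ and parametrize trigonometric polynomials of degree $\le m$, so the dimensions do not match. More to the point, the relations $\mathfrak M(\al,\be)^{\rm T}=0$ do not encode vanishing of Fourier coefficients of $|p|^2/|G|^2$; rather, if $Q\in\mathcal P_{2m}$ is the polynomial with coefficient vector $(\al,\be)$ as in \eqref{eqn:qofzeqre}, they encode that $\widehat{QR}(k_j)=0$ for $j=1,\dots,M$ (see \eqref{eqn:compqhatk}--\eqref{eqn:imparteq}). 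The correct distinguished element is therefore the coefficient vector of $G$ itself: taking $Q=G$ gives $QR=p\in\mathcal P(\La)$, so this vector lies in $\mathcal N$, and $z^{-m}G(z)\ge0$ on $\T$ by \eqref{eqn:elemformgz}, so it is a plus-vector. This is precisely the vector $(\widetilde\al_*,\widetilde\be_*)$ arising from the choice $h\equiv1$, $\widetilde Q=\widetilde G$ in the proof of Theorem~\ref{thm:exposedpoints}, and it is what yields \eqref{eqn:alwaystil}.
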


\begin{proof} We only have to verify the \lq\lq if" part. Because the polynomial's zeros $\ze_j$ lying on $\T$ (if any) are all simple, their multiplicities $\la_j$ do not exceed $1$, and the $\mu_j$'s in \eqref{eqn:mujlaj} are all null. Hence $\mu=0$ and $\widetilde m=m$. The polynomial $G_0$, as defined by \eqref{eqn:gnought}, reduces then to the constant function $1$; it follows that $\widetilde G=G$, $\widetilde R=R$ and eventually $\widetilde{\mathfrak M}=\mathfrak M$. Now, assuming that $p$ is an extreme point of $\text{\rm ball}(\mathcal P(\La))$, we use Theorem \ref{thm:extremepoints} to arrive at \eqref{eqn:rankmeqtwom}. Finally, we rewrite this last condition as $\text{\rm rank}\,\widetilde{\mathfrak M}=2\widetilde m$ and invoke Corollary \ref{cor:suffcondexposed} to infer that $p$ is exposed.
\end{proof} 

\par In subtler cases, however, the full strength of Theorem \ref{thm:exposedpoints} may be needed. We conclude this section by looking at a couple of examples to that effect. 

\begin{exmp} Let 
$$p(z)=\f12\left(1-z^2\right)^2.$$
It is easy to check that $\|p\|_1=1$ and $p\in\mathcal P(\La)$ with $\La=\{0,2,4\}$. When written in the form \eqref{eqn:deflam}, this set $\La$ is determined by taking $N=4$, $M=2$, $k_1=1$ and $k_2=3$. The only zeros of $p$ are $\ze_1=1$ and $\ze_2=-1$, their  multiplicities being $\la_1=\la_2=2$; the corresponding parameters in \eqref{eqn:mujlaj} are then $\mu_1=\mu_2=1$, while those in \eqref{eqn:mums} are $\mu=\widetilde m=2$ and $\widetilde s=0$. The polynomials \eqref{eqn:gnought}, \eqref{eqn:newfactone} and \eqref{eqn:newfacttwo} now take the form 
$$\widetilde G(z)=G_0(z)=-\left(1-z^2\right)^2$$
and 
$$\widetilde R(z)=-\f12.$$
It follows that $\widetilde A(0)=-\f12$, whereas the numbers $\widetilde A(k)$ (resp., $\widetilde B(k)$) are null for all $k\in\Z\setminus\{0\}$ (resp., for all $k\in\Z$). Using this to compute the matrix entries \eqref{eqn:abplustil} and \eqref{eqn:abminustil} with the appropriate values of $j$ and $l$, we eventually find that 
$$\widetilde{\mathfrak M}:=-\f12
\begin{pmatrix}
0 & 1 & 0 & 0 &0\\
0 & 1 & 0 & 0 &0\\
0 & 0 & 0 & -1 &0\\
0 & 0 & 0 & 1 &0
\end{pmatrix}.
$$
The rank of this matrix is $2$, so its kernel $\widetilde{\mathcal N}$ has dimension $3$. In fact, we also have $\text{\rm dim}_+\widetilde{\mathcal N}=3$, because the vectors 
$$(1,0,0,0,0),\quad(1,0,1,0,0)\quad\text{\rm and}\quad(1,0,0,0,1)$$
are linearly independent plus-vectors from $\widetilde{\mathcal N}$. An application of Theorem \ref{thm:exposedpoints} now shows that $p$ is not an exposed point of $\text{\rm ball}(\mathcal P(\La))$. At the same time, from Theorem A we see that $p$ is an extreme point of $\text{\rm ball}(\mathcal P_4)$ and hence of $\text{\rm ball}(\mathcal P(\La))$.
\end{exmp}

\begin{exmp} Let 
$$p(z)=c(1-z)^2(2+z),$$
where the number $c>0$ is chosen so as to make $\|p\|_1=1$. Equivalently, we have $p(z)=c\left(2-3z+z^3\right)$, so that $p\in\mathcal P(\La)$ with $\La=\{0,1,3\}$. To write this set $\La$ in the form \eqref{eqn:deflam}, we take $N=3$, $M=1$ and $k_1=2$. The other relevant parameters are $\widetilde m=\mu=1$ and $\widetilde s=1$. Furthermore, the polynomials \eqref{eqn:gnought}, \eqref{eqn:newfactone} and \eqref{eqn:newfacttwo} take the form 
$$\widetilde G(z)=G_0(z)=-(1-z)^2$$
and 
$$\widetilde R(z)=-c(2+z).$$
Using the coefficients of the latter polynomial to compute the entries \eqref{eqn:abplustil} and \eqref{eqn:abminustil}, we find that 
$$\widetilde{\mathfrak M}:=-2c
\begin{pmatrix}
1 & 1 & 0\\
0 & 0 & 1
\end{pmatrix}.
$$
\end{exmp}
The kernel $\widetilde{\mathcal N}$ of this matrix is one-dimensional; moreover, it is spanned by the vector $(1,-1,0)$ which is a plus-vector. Consequently, we have $\text{\rm dim}_+\widetilde{\mathcal N}=1$, and an application of Theorem \ref{thm:exposedpoints} reveals that $p$ is an exposed point of $\text{\rm ball}(\mathcal P(\La))$. (Alternatively, since $\text{\rm rank}\,\widetilde{\mathfrak M}=2$, we may arrive at the same conclusion via Corollary \ref{cor:suffcondexposed}.) One final observation is that $p$ fails to be exposed in $\text{\rm ball}(\mathcal P_3)$, as Theorem B shows. 

\section{Preliminaries}

This section contains three lemmas (at least two of them known) that will be employed later on. Below, we write $L^\infty_\R$ for the set of {\it real-valued} functions in $L^\infty=L^\infty(\T)$, the space of essentially bounded functions on $\T$. 

\begin{lem}\label{lem:charextreme} Let $X$ be a subspace of $L^1$. Suppose also that $f\in X$ is a function with $\|f\|_1=1$ that does not vanish a.e. on $\T$. The following conditions are equivalent. 
\par{\rm (i.1)} $f$ is an extreme point of $\text{\rm ball}(X)$. 
\par{\rm (ii.1)} Whenever $h\in L^\infty_\R$ and $fh\in X$, we have $h=\const$ a.e. on $\T$.
\end{lem}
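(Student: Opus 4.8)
The plan is to prove the two implications separately, using the standard perturbation characterization of extreme points in $L^1$. For the direction (ii.1) $\Rightarrow$ (i.1), suppose $f$ is not extreme, so that $f=\tfrac12(f_1+f_2)$ with $f_1,f_2\in\text{ball}(X)$ and $f_1\ne f_2$. Writing $g:=f_1-f$ (a nonzero element of $X$), we have $f\pm g\in\text{ball}(X)$, hence $\|f\pm g\|_1\le1=\|f\|_1$. Integrating the pointwise inequality coming from convexity of $|\cdot|$, one checks that $\|f+g\|_1+\|f-g\|_1\ge 2\|f\|_1$ with equality forcing $g$ to be, pointwise a.e. on $\{f\ne0\}$, a real multiple of $f$; that is, $g=fh$ for some real-valued measurable $h$ on $\{f\ne0\}$ (and we may set $h=0$ where $f=0$). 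The key point is then that $\|f\pm g\|_1\le1$ forces $h\in L^\infty$: indeed $|f(1\pm h)|=|f|\pm|g|$-type bookkeeping, together with $\int_\T |f|\,|1+h|\,|d\ze| = 2\pi\|f+g\|_1\le 2\pi$ and the analogous bound for $|1-h|$, pins $|h|\le1$ a.e. on $\{f\ne0\}$ after one verifies $g$ cannot have a component orthogonal to $f$ in the pointwise sense. Thus $h\in L^\infty_\R$, $fh=g\in X$, and $h\ne\const$ (else $g$ would be a scalar multiple of $f$, contradicting $\|f\pm g\|_1\le1$ unless $g=0$); this contradicts (ii.1).

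For the converse (i.1) $\Rightarrow$ (ii.1), suppose $h\in L^\infty_\R$ with $fh\in X$ but $h\ne\const$ a.e. After replacing $h$ by $ah+b$ with suitable real constants (note $b\cdot f\in X$ since $f\in X$, and $X$ is a linear space, so $ah+b$ still multiplies $f$ into $X$), we may arrange $\|h\|_\infty\le1$ while keeping $h$ nonconstant, so that $fh\ne0$. Then $f_1:=f(1+h)$ and $f_2:=f(1-h)$ both lie in $X$, and
\[
\|f_1\|_1+\|f_2\|_1=\f1{2\pi}\int_\T|f|\bigl(|1+h|+|1-h|\bigr)\,|d\ze|=\f1{2\pi}\int_\T|f|\cdot2\,|d\ze|=2,
\]
using $|1+h|+|1-h|=2$ wherever $-1\le h\le1$. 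Since each $\|f_i\|_1$ is a convex-combination decomposition of $\|f\|_1=1$ with $f=\tfrac12(f_1+f_2)$, and $f_1\ne f_2$ (because $fh\ne0$), we conclude $f$ is not extreme.

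The routine bookkeeping I am skipping is the pointwise identity analysis: on $\{f\ne 0\}$ one writes $g/f$ in polar form and observes that $|f+g|+|f-g|\ge 2|f|$ with equality iff $g/f$ is real, and $|f|\,|1+g/f|\le\dots$ etc.; this is the classical argument behind the fact that $\text{ball}(L^1)$ perturbations that preserve norm are exactly real multiplications. The main obstacle — really the only subtle point — is establishing in the first implication that the perturbation $g$ witnessing non-extremality must be of the form $fh$ with $h$ \emph{bounded}: non-extremality a priori only gives $g\in X$ with $\|f\pm g\|_1\le1$, and one must extract both that $g/f$ is real-valued a.e. on $\{f\ne0\}$ and that it is essentially bounded there; the latter uses that if $|g/f|>1$ on a positive-measure set then $\|f-g\|_1$ would pick up extra mass, violating $\|f-g\|_1\le1$. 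I expect this to be handled exactly as in de Leeuw–Rudin \cite{dLR} and the references in the excerpt, so the lemma's proof should be short.
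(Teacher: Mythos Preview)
The paper does not actually prove this lemma: it refers to Gamelin's book for the $H^1$ case and remarks that the same argument works verbatim for a general subspace $X\subset L^1$. Your sketch is essentially that standard argument, and the overall strategy is correct.

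There is, however, a gap in your (i.1)$\Rightarrow$(ii.1) direction. After arranging $\|h\|_\infty\le1$ you compute $\|f_1\|_1+\|f_2\|_1=2$, but this by itself does \emph{not} give $\|f_i\|_1\le1$ for each $i$, which is what you need to place $f_1,f_2$ in $\text{ball}(X)$. Indeed, since $h$ is real with $|h|\le1$ one has $|1\pm h|=1\pm h$, whence
\[
\|f(1+h)\|_1=1+\f1{2\pi}\int_\T|f|h\,|d\ze|,\qquad \|f(1-h)\|_1=1-\f1{2\pi}\int_\T|f|h\,|d\ze|,
\]
and there is no reason for the integral on the right to vanish. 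The fix is easy: before rescaling, first replace $h$ by $h-c$ with $c:=\f1{2\pi}\int_\T|f|h\,|d\ze|\in\R$ (this keeps $fh\in X$ since $cf\in X$, and keeps $h$ nonconstant); only then normalize to $\|h\|_\infty\le1$. After that, $\|f_1\|_1=\|f_2\|_1=1$ and the conclusion follows. Alternatively, without the centering step, write
\[
f=\f{\|f_1\|_1}{2}\cdot\f{f_1}{\|f_1\|_1}+\f{\|f_2\|_1}{2}\cdot\f{f_2}{\|f_2\|_1}
\]
and check that the two unit vectors are distinct (equality would force $h$ to be constant).

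Your (ii.1)$\Rightarrow$(i.1) sketch is correct but can be said more cleanly. The pointwise fact you need is: for complex $a,b$ with $a\ne0$, one has $|a+b|+|a-b|=2|a|$ if and only if $b/a\in[-1,1]$. Since $\|f+g\|_1+\|f-g\|_1\le2=2\|f\|_1$ forces equality in the integrated triangle inequality, and hence pointwise equality a.e., this single observation (applied on $\{f\ne0\}$, which is a.e.\ all of $\T$) simultaneously gives that $h:=g/f$ is real \emph{and} that $|h|\le1$. No separate ``extra mass'' argument for boundedness is needed.
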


A proof can be found in \cite [Chapter V, Section 9]{Gam}, where the case $X=H^1$ was treated (but without using any specific properties of $H^1$). The same argument works for an arbitrary subspace $X\subset L^1$ as well.

\begin{lem}\label{lem:charexposed} Under the assumptions of the preceding lemma, the following statements are equivalent. 
\par{\rm (i.2)} $f$ is an exposed point of $\text{\rm ball}(X)$. 
\par{\rm (ii.2)} Whenever $h$ is a nonnegative measurable function on $\T$ for which $fh\in X$, we have $h=\const$ a.e.
\end{lem}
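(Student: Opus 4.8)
The plan is to mimic the structure of the proof of Lemma \ref{lem:charextreme}, adapting the classical argument (as in \cite[Chapter V]{Gam}) from the two-sided perturbation relevant to extreme points to the one-sided perturbation relevant to exposed points. First I would treat the direction (ii.2)$\Rightarrow$(i.2). Assuming that every nonnegative measurable $h$ with $fh\in X$ is constant a.e., I want to produce a norm-one functional $\phi\in X^*$ whose level set $\{x\in\text{ball}(X):\phi(x)=1\}$ is the singleton $\{f\}$. The natural candidate is $\phi(x):=\f1{2\pi}\int_\T x\,\ov{u}\,|d\ze|$, where $u:=f/|f|$ is the unimodular argument function of $f$ (defined a.e. on the set where $f\ne0$, and extended by $0$ — or rather, one must be slightly careful, so instead take $u$ to be the sign of $f$ where $f\ne0$ and arbitrary unimodular elsewhere, noting $\|u\|_\infty\le1$). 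Then $\phi(f)=\|f\|_1=1$ and $\|\phi\|\le1$. If $x\in\text{ball}(X)$ satisfies $\phi(x)=1$, then $\f1{2\pi}\int_\T\Re(x\ov u)\,|d\ze|=1=\|x\|_1$, forcing $\Re(x\ov u)=|x|$ a.e., i.e. $x$ and $f$ have the same argument a.e.\ on $\{x\ne0\}\cap\{f\ne0\}$. The remaining task is to upgrade "same argument" to "$x=f$": writing $x=fh$ where $h\ge0$ a.e.\ (this is where positivity, not just real-valuedness, enters, and it is exactly the one-sided strengthening over Lemma \ref{lem:charextreme}), the hypothesis gives $h=\const$, and $\|x\|_1=\|f\|_1=1$ forces $h\equiv1$, so $x=f$.

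For the converse (i.2)$\Rightarrow$(ii.2), suppose $h\ge0$ a.e.\ with $fh\in X$ but $h$ is not a.e.\ constant. I would like to contradict exposedness by exhibiting a whole segment — or at least another point — on which every candidate exposing functional must be constant. The clean way: since $fh\in X$ and $h$ is a nonconstant nonnegative function, one can find $\eps>0$ so small that $1-\eps h\ge0$ on a set of positive measure but more to the point so that $h_\pm:=1\pm\eps(h-c)$ are both nonnegative a.e., where $c$ is chosen in the essential range of $h$ appropriately (e.g.\ $c$ the essential infimum if $h$ is bounded, after first reducing to the bounded case by truncation; if $h$ is unbounded one truncates $h$ at level $n$, notes $f\cdot\min(h,n)\in X$ fails in general, so instead one replaces $h$ by $h/(1+h)$ type maneuvers — care needed here). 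Then $f h_\pm\in X$, both have $\|fh_\pm\|_1$ computable, and after renormalizing one gets two distinct unit-norm elements $g_\pm$ of $X$ with $f=\tfrac12(g_++g_-)$ up to the normalization; any exposing functional $\phi$ for $f$ would then have to take the value $1$ at both $g_\pm$, contradicting uniqueness. This shows $f$ is not exposed.

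The main obstacle I anticipate is the integrability/truncation bookkeeping in the (i.2)$\Rightarrow$(ii.2) direction: $h$ is only assumed measurable and nonnegative with $fh\in L^1$, so $h$ itself need not be bounded, and naive perturbations $f(1\pm\eps(h-c))$ may leave $L^1$ or fail nonnegativity. The fix is to first observe that $fh\in X$ and $f\in X$ imply, by convexity of $X$, that $f\cdot\f{1+th}{1+t\,\widehat{h}(0)/\widehat{}}$ — rather, the cleanest device is: the function $h':=\min(h,n)$ satisfies $0\le h'\le h$, but $fh'$ need not lie in $X$ since $X$ is merely a subspace, not a lattice; so instead one argues directly with $h$ itself, using that $\{h<c\}$ and $\{h>c\}$ both have positive measure for a suitable median-type value $c$, and builds the two perturbed functions as $f\chi_{\{h\le c\}}\cdot\alpha+f\chi_{\{h>c\}}\cdot\beta$-type combinations only if those lie in $X$ — which they generally do not. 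The honest resolution, and the one I would pursue, is to avoid truncation entirely: set $g:=fh$, note $g\in X$, $\|g\|_1<\infty$, and for small $t>0$ consider $f_t:=\f{f+tg}{\|f+tg\|_1}=\f{f(1+th)}{\|f(1+th)\|_1}$; since $1+th\ge1>0$, $f_t$ and $f$ have the same argument, $f_t\in\text{ball}(X)$, and $f_t\ne f$ for small $t>0$ precisely because $h\ne\const$. If $\phi$ exposed $f$, then $\Re\phi(f_t)<1$ for all such $t$, yet $\phi$ restricted to the segment from $f$ through $f_t$ is affine and equals $1$ at $f$ — combining with the analogous construction using $1/(1+th)$ in place of $1+th$ (which is bounded, hence $f/(1+th)\in L^1$, and lies in $X$ only if\ldots again the subspace issue) pins $\phi$ down to be constant $=1$ on both sides, the contradiction. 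Sorting out exactly which of these perturbed functions genuinely lie in the subspace $X$ — and it is really only $f(1+th)=f+tfh$ that manifestly does — is the delicate point, and the argument must be arranged to use only that one admissible perturbation, extracting the contradiction from the single ray $\{f+tg:t\ge0\}\cap\text{ball}(X)$ having $f$ as a non-endpoint.
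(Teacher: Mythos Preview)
Your argument for (ii.2)$\Rightarrow$(i.2) is essentially correct: the functional $\phi(x)=\f1{2\pi}\int_\T x\,\ov u\,|d\ze|$ with $u=f/|f|$ on $\{f\ne0\}$ and $u=0$ elsewhere does expose $f$, once you observe that (ii.2) itself forces $|\{f=0\}|=0$ (otherwise $h=\chi_{\{f\ne0\}}$ would be a nonconstant nonnegative function with $fh=f\in X$).

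The gap is in (i.2)$\Rightarrow$(ii.2), and you have correctly located but not resolved it. The only perturbation that manifestly stays in $X$ is $f+tg$ with $g=fh$ and $t\ge0$, and a one-sided ray gives no contradiction by convexity alone: your closing claim that $f$ is a ``non-endpoint'' of $\{f+tg:t\ge0\}\cap\text{\rm ball}(X)$ is simply false, since $\|f+tg\|_1=\f1{2\pi}\int_\T|f|(1+th)\,|d\ze|=1+t\|g\|_1>1$ for every $t>0$, so that intersection is $\{f\}$. The maneuvers with $1/(1+th)$ or truncations $\min(h,n)$ all founder, as you note, on the fact that $X$ is merely a linear subspace. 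The missing idea---and it is exactly the de Leeuw--Rudin computation the paper invokes---is to \emph{represent} the exposing functional rather than perturb $f$. Given any $\phi\in X^*$ with $\|\phi\|=1$ and $\phi(f)=1$, extend it by Hahn--Banach to $L^1$ and write $\phi(x)=\f1{2\pi}\int_\T x\,\ov\psi\,|d\ze|$ with $\|\psi\|_\infty\le1$; the equality $\phi(f)=\|f\|_1$ forces $\psi=f/|f|$ a.e.\ on $\{f\ne0\}$. Then for $g=fh$ with $h\ge0$ one computes directly
\[
\phi(g)=\f1{2\pi}\int_\T fh\cdot\f{\ov f}{|f|}\,|d\ze|=\f1{2\pi}\int_\T|f|\,h\,|d\ze|=\|g\|_1,
\]
so $g/\|g\|_1\in\text{\rm ball}(X)$ also satisfies $\phi=1$ there; exposedness gives $g=\|g\|_1\,f$, i.e.\ $h=\const$ a.e.\ on $\{f\ne0\}$. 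No truncation, no second perturbation, no lattice structure is needed.
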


This result appears---in a slightly more general form---as Lemma 1(B) in \cite{DMRL2000}. In fact, for $X=H^1$, the equivalence between (i.2) and (ii.2) is also implied by de Leeuw and Rudin's work in \cite[Subsection 4.2]{dLR}, even though their wording and notation may differ somewhat from ours. Moreover, their reasoning carries over to a generic subspace $X$ of $L^1$. 

\par Our last lemma generalizes the classical fact (see \cite[p.\,92]{G}) that any nonnegative function in the Hardy class $H^{1/2}$ is constant. Here, $H^{1/2}$ can be defined as the closure of $H^1$ in $L^{1/2}=L^{1/2}(\T)$. 

\begin{lem}\label{lem:onehalf} Given an integer $k\ge0$, suppose that $f\in H^{1/2}$ and $fz^{-k}\ge0$ a.e. on $\T$. Then $f\in\mathcal P_{2k}$ (i.e., $f$ is a polynomial of degree at most $2k$).
\end{lem}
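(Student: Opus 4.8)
The plan is to exploit the inner–outer factorization in $H^{1/2}$ together with the hypothesis that $fz^{-k}\ge0$ a.e.\ on $\T$. First I would dispose of the trivial case $f\equiv0$ and then assume $f$ is not identically zero. Since $f\in H^{1/2}$, it admits a factorization $f=BS F$ into a Blaschke product $B$, a singular inner function $S$, and an outer function $F\in H^{1/2}$ (the classical theory extends to $H^p$ for all $p>0$). The function $g:=fz^{-k}$ is real and nonnegative a.e.\ on $\T$, and $g=z^{-k}BSF$. The idea is to show that $z^{-k}BS$ must in fact be a unimodular constant times a polynomial of degree at most $2k$ with no singular part, which will force $S\equiv1$, $B$ to be a finite Blaschke product of degree $\le 2k$ coming in "symmetric pairs," and $F$ to be a constant.

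The key step I would carry out is the following. Consider $h:=g^2=|f|^2z^{-2k}$. Formally $h = z^{-2k}B^2S^2F^2$, but since $g\ge0$ is real we also have $g^2=g\cdot\overline{g}$; working with $g$ directly, the fact that $g$ is real a.e.\ means $\overline{g}=g$ on $\T$, i.e.\ $z^{-k}BSF=\overline{z^{-k}BSF}=z^{k}\overline{B}\,\overline{S}\,\overline{F}$ a.e.\ on $\T$. On $\T$ we have $\overline{B}=1/B$, $\overline{S}=1/S$, and $\overline{F}=\overline{F}$; thus, clearing denominators, $z^{-2k}B^2S^2F^2 = \overline{F}/F \cdot (\text{stuff})$ — more cleanly, $z^{-2k}B^2S^2 = \overline{F}/F$ a.e.\ on $\T$. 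The left-hand side is (the boundary value of) a meromorphic function on $\D$ of a very restricted type, while $\overline F/F$ is a unimodular function whose argument is $-2\arg F$. Taking $\log|\cdot|$ is useless since both sides are unimodular, so instead I would argue that $z^{-2k}B^2S^2$ extends to be holomorphic and zero-free on $\D\setminus\{0\}$ with a pole of order at most $2k$ at $0$: being equal on $\T$ to $\overline F/F$, which is the boundary function of the meromorphic function $\widetilde F/F$ where $\widetilde F(z):=\overline{F(1/\overline z)}$ is anti-holomorphic, one shows $z^{2k}B^2S^2 \cdot F = \overline F$ as an identity giving that $z^{2k}\overline F$ has the inner–outer structure forcing $S^2\equiv 1$ (a singular inner function cannot divide $z^{2k}\overline F$, whose "inner part" relative to the lower half-plane reflection involves only a finite Blaschke product), hence $S\equiv1$, and $B^2$ must be a finite Blaschke product with $z^{2k}$-controlled degree, hence $\deg B^2\le 2k$; since $F^2=\overline F^2\cdot(\text{unimodular inner})^{-1}$ has trivial inner factor it is outer, and an outer function whose reflection $\widetilde F$ equals a constant times $F$ up to a Blaschke factor of the stated degree must itself be constant. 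Collecting: $f=c\,z^k B$ with $B$ a finite Blaschke product of degree $\le k$... but more care is needed because $z^{-k}B$ need not be a polynomial — here one uses that $g=z^{-k}BF\cdot c \ge 0$ and $g\in H^{1/2}$ as a genuine (nonnegative) $L^{1/2}$ function; the nonnegativity upgrades "Blaschke of degree $\le 2k$ symmetric about $\T$" to "$f$ is literally a polynomial of degree $\le 2k$," via Lemma~\ref{lem:onehalf}'s classical prototype: $g\in H^{1/2}$, $g\ge0$, so $g$ is constant, whence $f=c z^k$, which is a (monomial) polynomial of degree $k\le 2k$.

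Let me reorganize into the cleanest route, which is almost surely the intended one: since $f\in H^{1/2}$, so is $f$, and $\overline{z}^{\,k}f=fz^{-k}=:g$ satisfies $g\ge0$ a.e.\ on $\T$. The product $g\cdot z^k = f\in H^{1/2}$, and separately $g\cdot\overline z^{\,k}$... I would instead consider $P(z):=z^{2k}g(z)$ thought of via $P=z^{2k}\cdot\overline z^{\,k}f=z^k f$, and its "conjugate-reflected" companion: because $g$ is real on $\T$, $z^k f = z^{2k}g = z^{2k}\overline g = z^{2k}\overline{\overline z^{\,k}f}=z^{2k}\cdot z^k\overline f = z^{3k}\overline f$ — hmm, that gives $f=z^{2k}\overline f$ on $\T$, i.e.\ $\overline f = z^{-2k}f$ on $\T$. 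Now $\overline f$ is the boundary value of $\widetilde f(z)=\overline{f(1/\overline z)}$, anti-holomorphic near $\overline\D$; the identity $\widetilde f = z^{-2k}f$ on $\T$ says that $f/z^{2k}$... the standard move: multiply by a high power and conclude $z^{2k}\overline f$ and $f$ share boundary values, so $f(z)=z^{2k}\overline{f(1/\overline z)}$ identically, which is exactly the functional equation $f=f^\sharp$ with $f^\sharp(z):=z^{2k}\overline{f(1/\bar z)}$. Since $f\in H^{1/2}$ has no poles in $\D$ and $f^\sharp$ has a pole of order $\le 2k$ at $0$ and is otherwise holomorphic in $\D$ (with zeros reflected), equality forces $f$ to be a rational function holomorphic in $\overline\D$ with the only possible pole of $f^\sharp$ accounted for, i.e.\ $f$ is a polynomial; counting degrees in the relation $\deg f + \deg(\text{reflected part}) = 2k$ gives $\deg f\le 2k$.

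The main obstacle I anticipate is the $H^{1/2}$ regularity: the functional equation $f = f^\sharp$ must be derived as an identity of holomorphic (resp.\ meromorphic) functions on $\D$, not merely as an a.e.\ equality of boundary traces, and for $H^{1/2}$ functions the passage from boundary values to interior identities requires knowing that an $H^{1/2}$ function is determined by its boundary values and that $f^\sharp$ lands in the right class of meromorphic functions (Nevanlinna class with a single controlled pole). Establishing this carefully — likely by writing $f=BSF$, cancelling the singular inner factor $S$ using that a positive function in $H^{1/2}$ has no singular inner part (the classical prototype the lemma generalizes), and then running a Blaschke-degree count against the factor $z^{2k}$ — is the crux; the rest is bookkeeping with multiplicities showing the zeros of $f$ off $\T$ pair up as $(a,1/\overline a)$ and the total degree is at most $2k$.
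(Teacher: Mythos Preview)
Your second route arrives at the correct boundary identity $\overline f = z^{-2k}f$ a.e.\ on $\T$, and this is indeed the heart of the matter. But you correctly flag, and then do not resolve, the decisive obstacle: from this identity alone you cannot run a spectrum argument, because $f$ is only in $L^{1/2}$ and has no Fourier coefficients. Your attempt to pass to the pseudocontinuation $f^\sharp(z)=z^{2k}\overline{f(1/\bar z)}$ and assert that ``equality forces $f$ to be a rational function'' is where the gap lies: a.e.\ agreement of nontangential boundary values between an $H^{1/2}$ function on $\D$ and a meromorphic function on the exterior does not by itself give an analytic continuation across $\T$. The vague plan (``cancel the singular factor, run a Blaschke-degree count, pair up zeros'') is not a proof, and your first attempt goes astray earlier: you conclude the outer part $F$ must be constant and hence $f=cz^k$, but $f(z)=(1+z)^2$ with $k=1$ is outer, nonconstant, and satisfies the hypotheses.

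The paper closes exactly this gap with one clean move you are missing: write $f=Bg^2$ with $B$ a Blaschke product and $g\in H^1$ (the standard square-root factorization for $H^{1/2}$). Then $|f|=g\bar g$ and $|f|=fz^{-k}=Bg^2z^{-k}$ on $\T$, so after cancelling $g$ one gets $\bar g = Bgz^{-k}$. Now both $g$ and $Bg$ lie in $H^1\subset L^1$, so Fourier spectra are available: $\text{spec}\,\bar g\subset(-\infty,0]$ while $\text{spec}(Bgz^{-k})\subset[-k,\infty)$, forcing both into $[-k,0]$. Hence $g,\,Bg\in\mathcal P_k$ and $f=(Bg)\cdot g\in\mathcal P_{2k}$. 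The factorization $f=Bg^2$ is precisely what transports the problem from $H^{1/2}$ (where your identity lives but cannot be exploited) to $H^1$ (where the spectrum argument is immediate).
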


\begin{proof} We may assume that $f\not\equiv0$. A standard factorization theorem for Hardy spaces (see \cite[Chapter II]{G}) tells us that $f$ has the form $Bg^2$, where $B$ is a Blaschke product and $g\in H^1$. In particular, since $|B|=1$, we have 
\begin{equation}\label{eqn:modfone}
|f|=|g|^2=g\ov g
\end{equation}
(the identities involved are always assumed to hold a.e. on $\T$). On the other hand, the hypothesis that $fz^{-k}\ge0$ yields 
\begin{equation}\label{eqn:modftwo}
|f|=fz^{-k}=Bg^2z^{-k}.
\end{equation}
Comparing \eqref{eqn:modfone} and \eqref{eqn:modftwo}, we see that $g\ov g=Bg^2z^{-k}$, or equivalently, 
\begin{equation}\label{eqn:antianal}
\ov g=Bgz^{-k}.
\end{equation}
Now, the functions $g$ and $Bg$ are both in $H^1$, so their spectra are contained in $[0,\infty)$. It follows that 
\begin{equation}\label{eqn:twospectra}
\text{\rm spec}\,\ov g\subset(-\infty,0]\quad\text{\rm and}\quad
\text{\rm spec}\left(Bgz^{-k}\right)\subset[-k,\infty).
\end{equation}
At the same time, the two spectra in \eqref{eqn:twospectra} are equal by virtue of \eqref{eqn:antianal}, so they are actually contained in $[-k,0]$. This in turn implies that 
$$\text{\rm spec}\,g\subset[0,k]\quad\text{\rm and}\quad
\text{\rm spec}(Bg)\subset[0,k].$$
In other words, $g$ and $Bg$ are both in $\mathcal P_k$. Consequently, their product (which is $f$) lies in $\mathcal P_{2k}$, as required.
\end{proof}

\section{Proof of Theorem \ref{thm:extremepoints}}

Let $p\in\mathcal P(\La)$ and $\|p\|_1=1$. In view of Lemma \ref{lem:charextreme}, the issue boils down to deciding whether $p$ can be multiplied by a nonconstant function $h\in L^\infty_\R$ to produce another polynomial in $\mathcal P(\La)$. 
\par Our method consists essentially in parametrizing the class of eligible functions $h$. So let us assume that $h\in L^\infty_\R$ and the product $ph=:q$ is in $\mathcal P(\La)$. We have then 
\begin{equation}\label{eqn:hqpqp}
h=\f qp=\f{\ov q}{\ov p}=\f{z^N\ov q}{z^N\ov p}=\f{q^*}{p^*}
\end{equation}
on $\T$, whence in particular 
\begin{equation}\label{eqn:ptimesqstar}
pq^*=p^*q.
\end{equation}
This last identity must actually hold everywhere in $\C$, since both sides are polynomials. 

\par Next, we recall the factorization $p=GR$, where the two factors are defined by  \eqref{eqn:ggg} and \eqref{eqn:rrr}, and we go on to claim that the ratio 
\begin{equation}\label{eqn:qqr}
Q:=\f qR
\end{equation}
is a polynomial. To see why, let $\ze_1,\dots,\ze_\nu$ be the distinct zeros of $p$ lying on $\T$, and let $\la_1,\dots,\la_\nu$ be their respective multiplicities (we stick to the notation of Section 3). The $\ze_j$'s are then also zeros for $p^*$, with the same multiplicities. Consequently, the polynomial 
\begin{equation}\label{eqn:polyzercirc}
\mathcal T(z):=\prod_{j=1}^\nu(z-\ze_j)^{\la_j}
\end{equation}
divides both $p$ and $p^*$, while the product 
\begin{equation}\label{eqn:defcapphi}
G\mathcal T=:\Phi
\end{equation}
is the {\it greatest common divisor} (GCD) of $p$ and $p^*$. For future reference, we write down the latter fact as 
\begin{equation}\label{eqn:phigcd}
\Phi=\text{\rm GCD}\,(p,p^*).
\end{equation}
We further remark that $\mathcal T$ divides $R$ (because $R=p/G$ and $G$ has no zeros on $\T$), so that 
\begin{equation}\label{eqn:defcappsi}
R/\mathcal T=:\Psi
\end{equation}
is a polynomial and 
$$p=G\mathcal T\cdot\f R{\mathcal T}=\Phi\Psi.$$ 
Plugging this into \eqref{eqn:ptimesqstar} yields
\begin{equation}\label{eqn:psiq}
\Psi q^*=\f{p^*}{\Phi}q,
\end{equation}
and since the polynomials $p/\Phi(=\Psi)$ and $p^*/\Phi$ are relatively prime by virtue of \eqref{eqn:phigcd}, it follows from \eqref{eqn:psiq} that $\Psi$ divides $q$. At the same time, $q$ is divisible by $\mathcal T$, since otherwise the ratio $q/p(=h)$ would not be essentially bounded on $\T$. Finally, because the polynomials $\Psi$ and $\mathcal T$ are relatively prime (indeed, they have disjoint zero sets) and each of them divides $q$, we conclude that $q$ is divisible by their product, which is $R$. This proves our claim that the function $Q$ in \eqref{eqn:qqr} is a polynomial. 

\par Going back to the identity $h=q/p$, we now combine it with the equalities 
$p=GR$ and $q=QR$ to find that 
\begin{equation}\label{eqn:hqg}
h=Q/G.
\end{equation}
Furthermore, we have the elementary formula 
\begin{equation}\label{eqn:elemformgz}
G(z)=z^m\prod_{j=1}^n|z-a_j|^{2m_j},\qquad z\in\T
\end{equation}
(which holds because $1-\ov a_jz=z(\ov z-\ov a_j)$ for all $j$ and all $z\in\T$), and together with \eqref{eqn:hqg} this shows that 
\begin{equation}\label{eqn:zmqhprod}
z^{-m}Q(z)=h(z)\prod_{j=1}^n|z-a_j|^{2m_j},\qquad z\in\T.
\end{equation}
Consequently, the function $z\mapsto z^{-m}Q(z)$ is {\it real-valued} on $\T$, so its spectrum, $\text{\rm spec}(z^{-m}Q)$, is symmetric with respect to the origin. This function is also a {\it trigonometric polynomial} (because $Q$ is an analytic polynomial, as explained above); and since $\text{\rm spec}\,Q\subset[0,\infty)$, it follows easily that 
$$\text{\rm spec}(z^{-m}Q)\subset[-m,m].$$
The coefficients
\begin{equation}\label{eqn:defofdk}
d_l:=\widehat{(z^{-m}Q)}(l)=\widehat Q(l+m),\qquad l\in\Z,
\end{equation}
are therefore null for $|l|>m$ and satisfy the relations 
\begin{equation}\label{eqn:symcoef}
d_{-l}=\ov d_l\quad\text{\rm for}\quad l=0,\dots,m 
\end{equation}
(whence, in particular, $d_0\in\R$). We have then 
\begin{equation}\label{eqn:qofzeqsum}
Q(z)=\sum_{l=0}^{2m}d_{l-m}z^l,
\end{equation}
and there are further restrictions on the $d_l$'s coming from the fact that the polynomial $q=QR$ is in $\mathcal P(\La)$. 

\par To make these explicit, let us compute the Fourier coefficients $\widehat q(k)$ in terms of the numbers \eqref{eqn:defofck} and \eqref{eqn:defofdk}. For a fixed $k\in\Z$, we get
\begin{equation}\label{eqn:compqhatk}
\begin{aligned}
\widehat q(k)&=\sum_{l=0}^{2m}\widehat R(k-l)\widehat Q(l)
=\sum_{l=0}^{2m}C_{k-l}\,d_{l-m}
=\sum_{l=-m}^{m}C_{k-l-m}\,d_l\\
&=\sum_{l=0}^{m}C_{k-l-m}\,d_l+\sum_{l=1}^{m}C_{k+l-m}\,\ov d_l,
\end{aligned}
\end{equation}
where the last step relies on \eqref{eqn:symcoef}. Now, since $q\in\mathcal P(\La)$, we have the conditions 
\begin{equation}\label{eqn:qhatkjzero}
\widehat q(k_j)=0\quad\text{\rm for}\quad j=1,\dots,M,
\end{equation}
and \eqref{eqn:compqhatk} allows us to rewrite them as 
\begin{equation}\label{eqn:qhatkjnull}
\sum_{l=0}^{m}C_{k_j-l-m}\,d_l+\sum_{l=1}^{m}C_{k_j+l-m}\,\ov d_l=0.
\end{equation}
We now introduce the {\it real} parameters $\al_0,\al_1,\dots,\al_m$ and $\be_1,\dots,\be_m$ setting 
\begin{equation}\label{eqn:dalbe}
d_0=2\al_0,\quad d_l=\al_l+i\be_l\quad\text{\rm for}\quad l=1,\dots,m.
\end{equation}
(This done, \eqref{eqn:qofzeqsum} takes the form 
\begin{equation}\label{eqn:qofzeqre}
Q(z)=2z^m\left\{\al_0+\text{\rm Re}\,\sum_{l=1}^{m}(\al_l+i\be_l)z^l\right\},
\qquad z\in\T,
\end{equation}
a formula to be noted for later reference.) Also, in accordance with \eqref{eqn:defakbk} we write 
\begin{equation}\label{eqn:cab}
C_r=A(r)+iB(r),\qquad r\in\Z,
\end{equation}
with $A(r)$ and $B(r)$ real. Plugging \eqref{eqn:dalbe} and \eqref{eqn:cab} into \eqref{eqn:qhatkjnull}, we then split each of the resulting equations into a real and imaginary part to obtain 
\begin{equation}\label{eqn:reparteq}
\sum_{l=0}^mA^+_{j,l}\,\al_l+\sum_{l=1}^mB^-_{j,l}\,\be_l=0\qquad(j=1,\dots,M)
\end{equation}
and 
\begin{equation}\label{eqn:imparteq}
\sum_{l=0}^mB^+_{j,l}\,\al_l-\sum_{l=1}^mA^-_{j,l}\,\be_l=0\qquad(j=1,\dots,M),
\end{equation}
where the notations \eqref{eqn:abplus} and \eqref{eqn:abminus} have been used. 

\par The system of $2M$ real equations \eqref{eqn:reparteq}\,\&\,\eqref{eqn:imparteq}, which has thus emerged, means that the vector 
\begin{equation}\label{eqn:vectoralbe}
(\al,\be):=(\al_0,\al_1,\dots,\al_m,\be_1,\dots,\be_m)
\end{equation}
(or rather the column vector $(\al,\be)^{\rm T}$, where the superscript $\text{\rm T}$ denotes transposition) belongs to the subspace 
\begin{equation}\label{eqn:nullspace}
\mathcal N:=\text{\rm ker}\,\mathfrak M,
\end{equation}
the kernel of the linear map $\mathfrak M:\R^{2m+1}\to\R^{2M}$ defined by \eqref{eqn:blockmatrix}. 

\par To summarize, every function $h\in L^\infty_\R$ for which $ph\in\mathcal P(\La)$ is given by \eqref{eqn:hqg}, where $Q$ is a polynomial of the form \eqref{eqn:qofzeqre} whose coefficient vector \eqref{eqn:vectoralbe} satisfies $(\al,\be)\in\mathcal N$. Conversely, for every vector \eqref{eqn:vectoralbe} from $\mathcal N$, we may consider the associated polynomial \eqref{eqn:qofzeqre} and use it to define the function $h$ by \eqref{eqn:hqg}. This $h$ will be in $L^\infty_\R$, thanks to \eqref{eqn:zmqhprod}, and the polynomial $q=QR(=ph)$ will be in $\mathcal P(\La)$. Indeed, from \eqref{eqn:degrles} and \eqref{eqn:qofzeqsum} it follows that $q\in\mathcal P_N$, while the conditions \eqref{eqn:qhatkjzero} are ensured by \eqref{eqn:reparteq} and \eqref{eqn:imparteq}. 

\par The constant function $h=1$ corresponds to the choice $Q=G$; the associated coefficient vector, say $(\al_G,\be_G)$, is then a nonzero element of $\mathcal N$, so we always have $\text{\rm dim}\,\mathcal N\ge1$. Moreover, it is now clear that a {\it nonconstant} function $h\in L^\infty_\R$ with $ph\in\mathcal P(\La)$ can be found if and only if $\text{\rm dim}\,\mathcal N>1$. (In fact, for such an $h$ to exist, there should be a vector $(\al,\be)\in\mathcal N$ other than a scalar multiple of $(\al_G,\be_G)$.) Consequently, in view of Lemma \ref{lem:charextreme}, this last condition characterizes the non-extreme points $p$. In other words, the extreme points of $\text{\rm ball}(\mathcal P(\La))$ are precisely those unit-norm polynomials $p\in\mathcal P(\La)$ for which $\text{\rm dim}\,\mathcal N=1$. 

\par Finally, because the rank-nullity theorem tells us that 
$$\text{\rm rank}\,\mathfrak M+\text{\rm dim}\,\mathcal N=2m+1,$$
the criterion just found can also be stated in the form $\text{\rm rank}\,\mathfrak M=2m$. The proof is complete.

\section{Proofs of Theorem \ref{thm:exposedpoints} and Corollary \ref{cor:suffcondexposed}}

\noindent{\it Proof of Theorem \ref{thm:exposedpoints}.} Let $p\in\mathcal P(\La)$ and $\|p\|_1=1$. This time, in view of Lemma \ref{lem:charexposed}, we need to determine whether $p$ can be multiplied by a nonconstant function $h\ge0$ to produce another polynomial in $\mathcal P(\La)$. So let us assume that $h$ is a nonnegative function on $\T$ and the product $ph=:q$ is in $\mathcal P(\La)$. This brings us to identities \eqref{eqn:hqpqp} exactly as before (indeed, to verify them, one only uses the fact that $h$ is real-valued). As a consequence, \eqref{eqn:ptimesqstar} holds true on $\T$ and hence everywhere in $\C$. 

\par In what follows, we retain the notation established in Sections 2 and 3 above. In addition to the various polynomials (such as $G$, $R$, $G_0$, $\widetilde G$,  $\widetilde R$) that were introduced there, we shall make use of the polynomials $\mathcal T$, $\Phi$ and $\Psi$ coming from the preceding proof; see formulas \eqref{eqn:polyzercirc}, \eqref{eqn:defcapphi} and \eqref{eqn:defcappsi} for definitions. In particular, the identities 
\begin{equation}\label{eqn:takacc}
p=\Phi\Psi=\widetilde G\widetilde R
\end{equation}
should be kept in mind. 

\par We now recall the formula \eqref{eqn:gnoughtbis}, along with the inequalities 
$$0\le2\mu_j\le\la_j,\qquad j=1,\dots,\nu,$$
to see that $G_0$ divides $\mathcal T$. We then look at the polynomial
$$S:=\mathcal T/G_0$$
and note that its zeros (if any) are all simple and contained among the $\ze_j$'s; in fact, the zeros of $S$ are precisely those $\ze_j$'s for which $\la_j$ is odd. The identity 
\begin{equation}\label{eqn:sqsqfree}
\mathcal T=G_0S
\end{equation}
thus provides a (standard) representation of $\mathcal T$ as the product of a square and a square-free factor. Furthermore, multiplying both sides of \eqref{eqn:sqsqfree} by $G$ yields 
$$\Phi=G\mathcal T=GG_0S=\widetilde GS,$$
and we may combine the resulting equality $\Phi=\widetilde GS$ with \eqref{eqn:takacc} to infer that 
\begin{equation}\label{eqn:spsieqrtil}
\widetilde R=S\Psi.
\end{equation}

\par On the other hand, going back to \eqref{eqn:ptimesqstar} and rewriting it as \eqref{eqn:psiq}, we couple this with \eqref{eqn:phigcd} to deduce (just as we did in the preceding section) that $\Psi$ divides $q$. The ratio $q/\Psi=:Q_0$ is therefore a polynomial. Yet another function we need is 
\begin{equation}\label{eqn:defqtil}
\widetilde Q:=\f{Q_0}S\left(=\f q{S\Psi}\right).
\end{equation}
Using \eqref{eqn:takacc}, \eqref{eqn:spsieqrtil} and \eqref{eqn:defqtil}, we now obtain
$$h=\f qp=\f q{\widetilde G\widetilde R}=\f q{\widetilde GS\Psi}
=\f{\widetilde Q}{\widetilde G},$$
so that
\begin{equation}\label{eqn:hqtilovergtil}
h=\widetilde Q/\widetilde G.
\end{equation}
Also, from \eqref{eqn:spsieqrtil} and \eqref{eqn:defqtil} we see that 
\begin{equation}\label{eqn:qqrtiltil}
q=\widetilde Q\widetilde R.
\end{equation}

\par This said, let us pause to take a closer look at $\widetilde Q$. Because $Q_0$ and $S$ are both polynomials, whereas the zeros of $S$ are all simple and contained in $\T$, it follows that $\widetilde Q(=Q_0/S)$ belongs to every Hardy space $H^{1-\varepsilon}$ with $0<\varepsilon<1$. In particular, 
\begin{equation}\label{eqn:honehalf}
\widetilde Q\in H^{1/2}. 
\end{equation}
Besides, we claim that 
\begin{equation}\label{eqn:qtilpos}
z^{-\widetilde m}\widetilde Q(z)\ge0,\qquad z\in\T,
\end{equation}
a fact to be verified in a moment. Indeed, we know from \eqref{eqn:elemformgz} that 
$z^{-m}G(z)\ge0$ for all $z\in\T$; similarly, the identity 
$$G_0(z)=z^\mu\prod_{j=1}^\nu|z-\ze_j|^{2\mu_j},\qquad z\in\T,$$
implies that $z^{-\mu}G_0(z)\ge0$ on $\T$. The two inequalities together yield 
\begin{equation}\label{eqn:uhogorlonos}
z^{-\widetilde m}\widetilde G(z)=z^{-m}G(z)\cdot z^{-\mu}G_0(z)\ge0
\end{equation}
(recall that $\widetilde m=m+\mu$ and $\widetilde G=GG_0$). At the same time, \eqref{eqn:hqtilovergtil} tells us that 
$\widetilde Q=\widetilde Gh$ and so
\begin{equation}\label{eqn:utkonos}
z^{-\widetilde m}\widetilde Q(z)=z^{-\widetilde m}\widetilde G(z)h(z).
\end{equation}
Finally, our claim \eqref{eqn:qtilpos} follows from \eqref{eqn:utkonos}, thanks to \eqref{eqn:uhogorlonos} and the hypothesis that $h\ge0$. 

\par Now that we have \eqref{eqn:honehalf} and \eqref{eqn:qtilpos} at our disposal, an application of Lemma \ref{lem:onehalf} shows that $\widetilde Q\in\mathcal P_{2\widetilde m}$. Consequently, the function $z\mapsto z^{-\widetilde m}\widetilde Q(z)$ is a {\it nonnegative} trigonometric polynomial with spectrum in $[-\widetilde m,\widetilde m]$. Denoting its $l$th Fourier coefficient by $\widetilde d_l$, we have 
\begin{equation}\label{eqn:qtileqsum}
\widetilde Q(z)=\sum_{l=0}^{2\widetilde m}\widetilde d_{l-\widetilde m}z^l.
\end{equation}
We also know that $\widetilde d_{-l}=\ov{\widetilde d_l}$ for all $l\in\Z$. Setting 
\begin{equation}\label{eqn:dtilalbe}
\widetilde d_0=2\widetilde \al_0\quad\text{\rm and}\quad
\widetilde d_l=\widetilde\al_l+i\widetilde\be_l\quad(l=1,\dots,\widetilde m),
\end{equation}
with $\widetilde\al_l$ and $\widetilde\be_l$ real, we now rewrite \eqref{eqn:qtileqsum} in terms of these real parameters to get
\begin{equation}\label{eqn:qtileqre}
\widetilde Q(z)=2z^{\widetilde m}\left\{\widetilde\al_0
+\text{\rm Re}\,\sum_{l=1}^{\widetilde m}(\widetilde\al_l+i\widetilde\be_l)z^l\right\},
\qquad z\in\T.
\end{equation}
We may therefore rephrase condition \eqref{eqn:qtilpos} by saying that the vector 
\begin{equation}\label{eqn:albevectortil}
(\widetilde\al,\widetilde\be):=(\widetilde\al_0,\widetilde\al_1,\dots,
\widetilde\al_{\widetilde m},\widetilde\be_1,\dots,\widetilde\be_{\widetilde m})
\end{equation}
is a plus-vector in $\R^{2\widetilde m+1}$. 

\par To gain full information about the $\widetilde\al_l$'s and $\widetilde\be_l$'s, we should also recall the relation \eqref{eqn:qqrtiltil} and use the fact that $q\in\mathcal P(\La)$. This amounts to recasting conditions \eqref{eqn:qhatkjzero} in terms of the coefficients of $\widetilde Q$ and $\widetilde R$. Eventually, these conditions take the form 
\begin{equation}\label{eqn:reparteqtil}
\sum_{l=0}^{\widetilde m}\widetilde A^+_{j,l}\,\widetilde\al_l+\sum_{l=1}^{\widetilde m}\widetilde B^-_{j,l}\,\widetilde\be_l=0\qquad(j=1,\dots,M)
\end{equation}
and 
\begin{equation}\label{eqn:imparteqtil}
\sum_{l=0}^{\widetilde m}\widetilde B^+_{j,l}\,\widetilde\al_l-\sum_{l=1}^{\widetilde m}\widetilde A^-_{j,l}\,\widetilde\be_l=0\qquad(j=1,\dots,M),
\end{equation}
where the notations \eqref{eqn:abplustil} and \eqref{eqn:abminustil} are being used. (The calculations leading to \eqref{eqn:reparteqtil} and \eqref{eqn:imparteqtil} are almost identical to those made in the preceding section, when moving from \eqref{eqn:qhatkjzero} to \eqref{eqn:reparteq} and \eqref{eqn:imparteq}. We only have to replace the factorization $q=QR$, used at that stage, by $q=\widetilde Q\widetilde R$ and proceed accordingly with the coefficients of the polynomials involved. Formally, we just switch to the tilde notation wherever applicable.) 

\par Now, equations \eqref{eqn:reparteqtil} and \eqref{eqn:imparteqtil} tell us that the vector \eqref{eqn:albevectortil}, or rather the column vector $(\widetilde\al,\widetilde\be)^{\rm T}$, belongs to the subspace $\widetilde{\mathcal N}$, defined (in accordance with \eqref{eqn:nullspacetil}) as the kernel of the linear map $\widetilde{\mathfrak M}:\R^{2\widetilde m+1}\to\R^{2M}$ with matrix \eqref{eqn:blockmatrixtil}. 

\par In summary, every nonnegative function $h$ with $ph\in\mathcal P(\La)$ is given by \eqref{eqn:hqtilovergtil}, where $\widetilde Q$ is a polynomial of the form \eqref{eqn:qtileqre} such that \eqref{eqn:albevectortil} is a plus-vector from $\widetilde{\mathcal N}$. Conversely, for every plus-vector \eqref{eqn:albevectortil} from $\widetilde{\mathcal N}$, we may consider the associated polynomial \eqref{eqn:qtileqre} and then define the function $h$ by \eqref{eqn:hqtilovergtil}. This $h$ will be nonnegative, thanks to \eqref{eqn:qtilpos} and \eqref{eqn:uhogorlonos}, and the polynomial $q=\widetilde Q\widetilde R(=ph)$ will be in $\mathcal P(\La)$. Indeed, from \eqref{eqn:degrtillestil} and \eqref{eqn:qtileqsum} it follows that $q\in\mathcal P_N$, while the conditions \eqref{eqn:qhatkjzero} are ensured by \eqref{eqn:reparteqtil} and \eqref{eqn:imparteqtil}. 

\par The constant function $h=1$ corresponds to the choice $\widetilde Q=\widetilde G$. Letting $(\widetilde\al_*,\widetilde\be_*)$ stand for the coefficient vector \eqref{eqn:albevectortil} that arises in this special case, we see that $(\widetilde\al_*,\widetilde\be_*)$ is a nonzero plus-vector in $\widetilde{\mathcal N}$. Thus, we always have 
\begin{equation}\label{eqn:alwaystil}
\text{\rm dim}_+\widetilde{\mathcal N}\ge1.
\end{equation}
Moreover, the above discussion reveals that a {\it nonconstant} function $h\ge0$ with $ph\in\mathcal P(\La)$ can be found if and only if $\widetilde{\mathcal N}$ contains plus-vectors that are not scalar multiples of $(\widetilde\al_*,\widetilde\be_*)$. This last condition, meaning that $\text{\rm dim}_+\widetilde{\mathcal N}>1$, is therefore equivalent to saying that $p$ is a non-exposed point of $\text{\rm ball}(\mathcal P(\La))$. In other words, the exposed points are characterized by the property that $\text{\rm dim}_+\widetilde{\mathcal N}=1$. The proof is complete. \qed

\medskip\noindent{\it Proof of Corollary \ref{cor:suffcondexposed}.} We know from the rank-nullity theorem that 
$$\text{\rm rank}\,\widetilde{\mathfrak M}+\text{\rm dim}\,\widetilde{\mathcal N}
=2\widetilde m+1.$$
Consequently, the hypothesis $\text{\rm rank}\,\widetilde{\mathfrak M}=2\widetilde m$ implies that $\text{\rm dim}\,\widetilde{\mathcal N}=1$ and hence $\text{\rm dim}_+\widetilde{\mathcal N}\le1$. In conjunction with \eqref{eqn:alwaystil}, this means that $\text{\rm dim}_+\widetilde{\mathcal N}$ is actually equal to $1$, and the result follows by Theorem \ref{thm:exposedpoints}. \qed

\medskip

\end{document}